\numberwithin{equation}{section}
\theoremstyle{definition}
\numberwithin{equation}{section}
\newcommand{\ncom}{\newcommand}
\ncom{\beq}{\begin{equation}}
\ncom{\eeq}{\end{equation}}
\ncom{\bea}{\begin{eqnarray*}}
\ncom{\eea}{\end{eqnarray*}}
\ncom{\beqa}{\begin{eqnarray}}
\ncom{\eeqa}{\end{eqnarray}}
\ncom{\nno}{\nonumber}
\ncom{\non}{\nonumber}
\ncom{\ds}{\displaystyle}
\ncom{\half}{\frac{1}{2}}
\ncom{\mbx}{\makebox{.25cm}}
\ncom{\hs}{\mbox{\hspace{.25cm}}}
\ncom{\rar}{\rightarrow}
\ncom{\Rar}{\Rightarrow}
\ncom{\noin}{\noindent}
\ncom{\bc}{\begin{center}}
\ncom{\ec}{\end{center}}
\ncom{\sz}{\scriptsize}
\ncom{\rf}{\ref}
\ncom{\s}{\sqrt{2}}
\ncom{\sgm}{\sigma}
\ncom{\Sgm}{\Sigma}
\ncom{\psgm}{\sigma^{\prime}}
\ncom{\dt}{\delta}
\ncom{\Dt}{\Delta}
\ncom{\lmd}{\lambda}
\ncom{\Lmd}{\Lambda}
\ncom{\Th}{\Theta}
\ncom{\e}{\eta}
\ncom{\eps}{\epsilon}
\ncom{\pcc}{\stackrel{P}{>}}
\ncom{\lp}{\stackrel{L_{p}}{>}}
\ncom{\dist}{{\rm\,dist}}
\ncom{\sspan}{{\rm\,span}}
\ncom{\re}{{\rm Re\,}}
\ncom{\im}{{\rm Im\,}}
\ncom{\sgn}{{\rm sgn\,}}
\ncom{\ba}{\begin{array}}
\ncom{\ea}{\end{array}}
\ncom{\hone}{\mbox{\hspace{1em}}}
\ncom{\htwo}{\mbox{\hspace{2em}}}
\ncom{\hthree}{\mbox{\hspace{3em}}}
\ncom{\hfour}{\mbox{\hspace{4em}}}
\ncom{\vone}{\vskip 2ex}
\ncom{\vtwo}{\vskip 4ex}
\ncom{\vonee}{\vskip 1.5ex}
\ncom{\vthree}{\vskip 6ex}
\ncom{\vfour}{\vspace*{8ex}}
\ncom{\norm}{\|\;\;\|}
\ncom{\integ}[4]{\int_{#1}^{#2}\,{#3}\,d{#4}}
\ncom{\vspan}[1]{{{\rm\,span}\{ #1 \}}}
\ncom{\dm}[1]{ {\displaystyle{#1} } }
\ncom{\ri}[1]{{#1} \index{#1}}
\newtheorem{theorem}{\bf Theorem}[section]
\newtheorem{remark}{\bf Remark}[section]
\newtheorem{proposition}{Proposition}[section]
\newtheorem{lemma}{Lemma}[section]
\newtheorem{definition}{Definition}[section]
\newtheoremstyle
    {remarkstyle}
    {}
    {11pt}
    {}
    {}
    {\bfseries}
    {:}
    {     }
    {\thmname{#1} \thmnumber{#2} }
\theoremstyle{remarkstyle}
\def\eps{\varepsilon}
\def\E{{\mathbb E}}
\begin{document}
\title{Fractional Skellam process of order $k$}
\author[Kuldeep Kumar Kataria]{Kuldeep Kumar Kataria}
\address{Kuldeep Kumar Kataria, Department of Mathematics, Indian Institute of Technology Bhilai, Raipur 492015, India.}
 \email{kuldeepk@iitbhilai.ac.in}
\author[Mostafizar Khandakar]{Mostafizar Khandakar}
\address{Mostafizar Khandakar, Department of Mathematics, Indian Institute of Technology Bhilai, Raipur 492015, India.}
\email{mostafizark@iitbhilai.ac.in}
\subjclass[2010]{Primary: 60G22; Secondary: 60G55}
\keywords{Skellam distribution; Poisson process of order $k$; L\'evy subordinator; LRD property.}
\date{March 16, 2021}
\begin{abstract}
We introduce and study a fractional version of the Skellam process of order $k$ by time-changing it with  an independent inverse stable subordinator. We call it the fractional Skellam process of order $k$ (FSPoK). An integral representation for its one-dimensional distributions and their governing system of fractional differential equations are obtained.  We derive the probability generating function, mean, variance and covariance of the FSPoK which are utilized to establish its long-range dependence property. Later, we considered two time-changed versions of the FSPoK. These are obtained by time-changing the FSPoK by an independent L\'evy subordinator and its inverse. Some distributional properties and particular cases are discussed for these time-changed processes. 
\end{abstract}

\maketitle
\section{Introduction}
Let $\{N(t)\}_{t\ge0}$ be a Poisson process with intensity $k\lambda$ and   $\{X_{i}\}_{i\ge1}$ be a sequence of independent and identically distributed (iid) discrete uniform random variables with support $S=\{1,2,\dots,k\}$.  Consider the following compound Poisson process: 
\begin{equation*}
N^{k}(t)=\sum_{i=1}^{N(t)}X_{i},
\end{equation*}
where $\{N(t)\}_{t\ge0}$ is independent of $\{X_{i}\}_{i\ge1}$. The process $\{N^{k}(t)\}_{t\ge0}$ is known as the Poisson process of order $k$ (PPoK) (see Kostadinova and Minkova (2013)). For $k=1$, the PPoK reduces to the Poisson process $\{N(t)\}_{t\ge0}$ with intensity $\lambda$. In ruin theory, the PPoK is used to model the number of claims where these arrive in groups of size $k$.  

Recently, Gupta {\it et al.} (2020) introduced and studied a L\'evy process $\{S^{k}(t)\}_{t\ge0}$ by considering the difference of two  PPoK, that is, 
\begin{equation}\label{skellam}
S^{k}(t)=N_{1}^{k}(t)-N_{2}^{k}(t),
\end{equation}
where $\{N_{1}^{k}(t)\}_{t\ge0}$ and  $\{N_{2}^{k}(t)\}_{t\ge0}$ are independent PPoK with intensities $\lambda_{1}>0$ and $\lambda_{2}>0$, respectively. It is called  the
Skellam process of order $k$ (SPoK) whose state probabilities $p^{k}(n,t)=\mathrm{Pr}\{S^{k}(t)=n\}$, $n\in\mathbb{Z}$  satisfy the following system of differential equations:
\begin{equation}\label{skellam gov}
\frac{\mathrm{d}}{\mathrm{d}t}p^{k}(n,t)=-k(\lambda_{1}+\lambda_{2})p^{k}(n,t)+\lambda_{1}\sum_{j=1}^{k}p^{k}(n-j,t)+\lambda_{2}\sum_{j=1}^{k}p^{k}(n+j,t),
\end{equation}
with initial conditions $p^{k}(0,0)=1$ and $p^{k}(n,0)=0, \ n\neq0$. For $k=1$, the SPoK reduces to an integer-valued L\'evy process, namely, the Skellam process $\{S(t)\}_{t\ge0}$ (see Barndorff-Nielsen {\it et al.} (2012)). For each $t\ge0$, the random variable $S(t)$ has the Skellam distribution (see  Skellam (1946)). It has several real life applications, for example,  it is used as a sensor noise model for cameras (see Hwang {\it et al.} (2007)) and in modeling the score differences  in a game of two competing teams (see Karlis and Ntzoufras (2009)).

In the past two decades, the time-changed point processes attracted the interest of several researchers due to their potential applications in different fields such as finance, hydrology, econometrics, {\it etc.} The Poisson process time-changed by a stable subordinator and by the inverse of a  stable subordinator leads to the space fractional Poisson process (see Orsingher and Polito (2012)) and the time fractional Poisson process (see Meerschaert {\it et al.} (2011)), respectively. For other time-changed version of the Poisson process, we refer the reader to Beghin (2012), Orsingher and Toaldo (2015), Aletti {\it et al.} (2018), {\it etc.}, and the references therein. 

 A subordinator $\{D_f(t)\}_{t\ge0}$ is a one-dimensional L\'evy process  whose Laplace transform is given by (see Applebaum (2009), Section 1.3.2)
\begin{equation*}
\mathbb{E}\left(e^{-sD_f(t)}\right)=e^{-tf(s)},
\end{equation*}
where the function 
\begin{equation}\label{fs}
f(s)=bs+\int_0^\infty \left(1-e^{-sx}\right)\,\mu(\mathrm{d}x),\ \ s>0,
\end{equation}
is called the Bern\v stein function. Here, $b\ge0$ is the drift coefficient and $\mu$ is a non-negative  L\'evy measure that satisfies $\mu([0,\infty))=\infty$ and
$\displaystyle\int_0^\infty (x\wedge1)\,\mu(\mathrm{d}x)<\infty$ where $s \wedge t=\min\{s,t\}$. The sample paths of a subordinator $\{D_f(t)\}_{t\ge0}$ are non-decreasing and $D_f(0)=0$ almost surely ({\it a.s.}). The first hitting time of $\{D_f (t)\}_{t\ge0}$ is called the inverse subordinator. It is defined as
\begin{equation*}
H_f (t)\coloneqq\inf\{r\ge0: D_f (r)> t\}, \ \ t\ge0.
\end{equation*}
A driftless subordinator, {\it i.e.}, $b=0$ with $f(s)=s^{\alpha}$, $0<\alpha<1$ is known as the stable subordinator. Its first hitting time $\{Y_{\alpha}(t)\}_{t\ge0}$ is called the inverse stable subordinator.

Recently, Gupta {\it et al.} (2020) introduced a time-changed SPoK by time-changing it with an independent subordinator. They obtained its probability mass function (pmf), mean, variance and covariance. Here, we introduce and study a time-changed version of the SPoK by time-changing it with an independent inverse stable subordinator. It is defined as 
\begin{equation*}
\mathcal{S}_{\alpha}^{k}(t)=\begin{cases}
S^{k}\left(Y_{\alpha}(t)\right), \ \ 0<\alpha<1,\\
S^{k}(t), \ \  \alpha=1.
\end{cases}
\end{equation*}
We call the process $\{\mathcal{S}_{\alpha}^{k}(t)\}_{t\ge0}$ as the fractional Skellam process of order $k$ (FSPoK). For $k=1$, the FSPoK reduces to the fractional Skellam process (FSP), for more details on FSP and its application to high frequency financial data set we refer the reader to Kerss {\it et al.} (2014). For $\alpha=k=1$, the FSPoK reduces to the classical Skellam process.

We obtain an integral representation for the one-dimensional distributions of FSPoK. Also, the governing system of fractional differential equations for its state probabilities is obtained. The probability generating function (pgf), mean, variance and covariance of the FSPoK are derived and its long-range dependence (LRD) property is established. Later, we explore its time-changed  versions.  We consider  the FSPoK time-changed by an independent L\'evy subordinator  and its inverse. We compute the mean, variance and covariance for these time-changed FSPoK. For the first time-changed version we establish the law of iterated logarithm and the LRD property under suitable restrictions on the L\'evy subordinator. Some particular cases of these time-changed versions are also considered by taking specific subordinators such as the gamma subordinator, tempered stable subordinator and inverse Gaussian subordinator. Also, we obtain the associated system of governing differential equations  for these particular cases. The results obtained in this paper generalize and complement the results of Kerss {\it et al.} (2014) and Gupta {\it et al.} (2020).

\section{Fractional Skellam process of order $k$}
In this section, we introduce and study a stochastic process, namely, the fractional Skellam process of order $k$ (FSPoK) that is obtained by time-changing the SPoK by an independent inverse stable subordinator. Let  $\{Y_{\alpha}(t)\}_{t\ge0},\ 0<\alpha<1$, be an inverse stable subordinator which is independent of SPoK $\{S^{k}(t)\}_{t\ge0}$. The FSPoK $\{\mathcal{S}_{\alpha}^{k}(t)\}_{t\ge0}$ is defined as 
\begin{equation}\label{FSPoK}
\mathcal{S}_{\alpha}^{k}(t)=\begin{cases}
S^{k}\left(Y_{\alpha}(t)\right), \ \ 0<\alpha<1,\\
S^{k}(t), \ \  \alpha=1.
\end{cases}
\end{equation}
For $k=1$, the process defined in (\ref{FSPoK}) reduces to the fractional Skellam process (FSP) which is  introduced and studied by Kerss {\it et al.} (2014). Moreover, for $\alpha=k=1$ the FSPoK reduces to the classical Skellam process.
	
In the following result we derive the system of governing differential equations that is  satisfied by the state probabilities $p_{\alpha}^{k}(n,t)=\mathrm{Pr}\{\mathcal{S}^{k}_{\alpha}(t)=n\}$ of FSPoK.
\begin{proposition}
The state probabilities $p_{\alpha}^{k}(n,t),\ n\in \mathbb{Z}$ of  FSPoK solves the following system of fractional differential equations: 
\begin{equation}\label{pmf}
\partial_{t}^{\alpha}p^{k}_{\alpha}(n,t)=-k(\lambda_{1}+\lambda_{2})p^{k}_{\alpha}(n,t)+\lambda_{1}\sum_{j=1}^{k}p^{k}_{\alpha}(n-j,t)+\lambda_{2}\sum_{j=1}^{k}p^{k}_{\alpha}(n+j,t),
\end{equation}
with the initial conditions $p^{k}_{\alpha}(0,0)=1$ and $p^{k}_{\alpha}(n,0)=0, \ n\neq0$. Here, $\partial_{t}^{\alpha}$ denotes the Caputo fractional derivative defined as (see Kilbas {\it et al.} (2006))
	\begin{equation*}
	\partial_{t}^{\alpha}f(t):=\left\{
	\begin{array}{ll}
	\dfrac{1}{\Gamma{(1-\alpha)}}\displaystyle\int^t_{0} (t-s)^{-\alpha}f'(s)\,\mathrm{d}s,\ \ 0<\alpha<1,\\\\
	f'(t),\ \ \alpha=1.
	\end{array}
	\right.
	\end{equation*}
\end{proposition}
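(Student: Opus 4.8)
The plan is to derive the governing equations by subordinating the known equations \eqref{skellam gov} for the SPoK through the inverse stable subordinator $\{Y_\alpha(t)\}_{t\ge0}$. The standard tool is the fact that if $h(x,t)$ denotes the density of $Y_\alpha(t)$, then its $t$-Laplace transform is $\widetilde h(x,s)=s^{\alpha-1}e^{-xs^\alpha}$, and that for a sufficiently regular function $g$ one has the operational identity relating $\int_0^\infty g(x)h(x,t)\,\mathrm dx$ to a Caputo derivative: more precisely, $\int_0^\infty (-g'(x))h(x,t)\,\mathrm dx = \partial_t^\alpha\!\left(\int_0^\infty g(x)h(x,t)\,\mathrm dx\right)$ plus the initial-value correction coming from $g(0)$. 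Concretely, I would write
\begin{equation*}
p_\alpha^k(n,t)=\int_0^\infty p^k(n,x)\,h(x,t)\,\mathrm dx,
\end{equation*}
which is legitimate because, by \eqref{FSPoK}, $\mathcal S_\alpha^k(t)=S^k(Y_\alpha(t))$ with $Y_\alpha$ independent of $S^k$, so conditioning on $Y_\alpha(t)=x$ gives exactly this mixture.

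First I would take Laplace transforms in $t$. Writing $\tilde p_\alpha^k(n,s)=\int_0^\infty e^{-st}p_\alpha^k(n,t)\,\mathrm dt$ and using $\widetilde h(x,s)=s^{\alpha-1}e^{-xs^\alpha}$, Fubini gives
\begin{equation*}
\tilde p_\alpha^k(n,s)=s^{\alpha-1}\int_0^\infty e^{-xs^\alpha}p^k(n,x)\,\mathrm dx = s^{\alpha-1}\,\hat p^k(n,s^\alpha),
\end{equation*}
where $\hat p^k(n,\cdot)$ is the $x$-Laplace transform of the SPoK probabilities. Next I would Laplace-transform the SPoK system \eqref{skellam gov} in the variable $x$; using $\widehat{\frac{\mathrm d}{\mathrm dx}p^k}(n,s)=s\,\hat p^k(n,s)-p^k(n,0)$ together with the initial conditions $p^k(0,0)=1$, $p^k(n,0)=0$ for $n\ne 0$, this yields an algebraic relation for $\hat p^k(n,s)$. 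Substituting $s\mapsto s^\alpha$ in that relation and multiplying through by $s^{\alpha-1}$, I rewrite everything in terms of $\tilde p_\alpha^k(n,s)$, which produces
\begin{equation*}
s^\alpha \tilde p_\alpha^k(n,s)-s^{\alpha-1}\mathbf 1_{\{n=0\}}=-k(\lambda_1+\lambda_2)\tilde p_\alpha^k(n,s)+\lambda_1\sum_{j=1}^k\tilde p_\alpha^k(n-j,s)+\lambda_2\sum_{j=1}^k\tilde p_\alpha^k(n+j,s).
\end{equation*}
Finally I would invert the Laplace transform: recognizing $s^\alpha\tilde p_\alpha^k(n,s)-s^{\alpha-1}p_\alpha^k(n,0)$ as the Laplace transform of the Caputo derivative $\partial_t^\alpha p_\alpha^k(n,t)$ (see Kilbas \emph{et al.} (2006)), and noting $p_\alpha^k(n,0)=p^k(n,0)=\mathbf 1_{\{n=0\}}$ since $Y_\alpha(0)=0$, I obtain exactly \eqref{pmf}. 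The stated initial conditions follow from $\mathcal S_\alpha^k(0)=S^k(0)=0$.

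The main technical obstacle is justifying the interchange of the infinite sums over $n$ (implicit when one manipulates the whole system at once), the integral over $x$, and the Laplace integral over $t$ — i.e.\ the Fubini/Tonelli steps and termwise Laplace inversion. This is handled by noting that all $p^k(n,x)$ and $h(x,t)$ are nonnegative and that $\sum_{n\in\mathbb Z}p^k(n,x)=1$, so the relevant double integrals are absolutely convergent, and by appealing to uniqueness of Laplace transforms for the inversion. A secondary point is confirming that the mixture representation and the operational rule for $\partial_t^\alpha$ apply to each fixed $n$ separately, which is immediate once the sums are dealt with. One may alternatively phrase the whole argument probabilistically via the generator of $S^k$ and the known subordination formula for inverse stable time changes, but the Laplace-transform route above is the most direct and keeps all the bookkeeping explicit.
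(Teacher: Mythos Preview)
Your proof is correct but follows a genuinely different route from the paper. The paper works entirely in the time domain: it uses the known PDE for the inverse stable density, namely $D_t^{\alpha}h_{\alpha}(u,t)=-\partial_u h_{\alpha}(u,t)$ (Riemann--Liouville derivative), applies it under the integral $p_\alpha^k(n,t)=\int_0^\infty p^k(n,u)h_\alpha(u,t)\,\mathrm{d}u$, integrates by parts in $u$, uses the boundary value $h_\alpha(0+,t)=t^{-\alpha}/\Gamma(1-\alpha)$, and then converts from the Riemann--Liouville to the Caputo derivative via $\partial_t^\alpha f = D_t^\alpha f - f(0)\,t^{-\alpha}/\Gamma(1-\alpha)$; finally it substitutes the SPoK system \eqref{skellam gov} for $\frac{\mathrm d}{\mathrm du}p^k(n,u)$. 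Your approach instead passes to the Laplace domain, exploiting $\widetilde h(x,s)=s^{\alpha-1}e^{-xs^\alpha}$ to obtain $\tilde p_\alpha^k(n,s)=s^{\alpha-1}\hat p^k(n,s^\alpha)$, then substitutes $s\mapsto s^\alpha$ in the Laplace-transformed version of \eqref{skellam gov} and recognizes $s^\alpha\tilde p_\alpha^k(n,s)-s^{\alpha-1}p_\alpha^k(n,0)$ as the transform of the Caputo derivative. Your route is purely algebraic and sidesteps the need for the boundary value of $h_\alpha$ and the Caputo/Riemann--Liouville conversion; the paper's route is more direct in the time domain and makes the role of the initial condition transparent through the integration-by-parts boundary term. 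Both are standard and equally rigorous once Fubini and Laplace uniqueness are invoked, which you address adequately.
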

\begin{proof}
From (\ref{FSPoK}), we have
\begin{equation}\label{ci}
p^{k}_{\alpha}(n,t)=\int_{0}^{\infty}p^{k}(n,u)h_{\alpha}(u,t)\,\mathrm{d}u,
\end{equation}	
where $h_{\alpha}(u,t)$ is the pdf of $\{Y_{\alpha}(t)\}_{t\ge0}$. For $\gamma\geq 0$, the Riemann-Liouville (R-L) derivative is defined as (see Kilbas {\it et al.} (2006))
\begin{equation}\label{RL}
D_t^{\gamma}f(t):=\left\{
\begin{array}{ll}
\dfrac{1}{\Gamma{(m-\gamma)}}\displaystyle \frac{\mathrm{d}^{m}}{\mathrm{d}t^{m}}\int^t_{0} \frac{f(s)}{(t-s)^{\gamma+1-m}}\,\mathrm{d}s,\ \ m-1<\gamma<m,\\\\
\displaystyle\frac{\mathrm{d}^{m}}{\mathrm{d}t^{m}}f(t),\ \ \gamma=m,
\end{array}
\right.
\end{equation}
where $m$ is a positive integer. Using the following result (see Meerschaert and Straka  (2013)) 
\begin{equation*}
D_{t}^{\alpha}h_{\alpha}(u,t)=-\frac{\partial}{\partial u}h_{\alpha}(u,t),
\end{equation*}
we get
\begin{align}\label{Rl}
D_{t}^{\alpha}p^{k}_{\alpha}(n,t)&=-\int_{0}^{\infty}p^{k}(n,u)\frac{\partial}{\partial u}h_{\alpha}(u,t)\mathrm{d}u\nonumber\\
&=p^{k}(n,0)h_{\alpha}(0+,t)+\int_{0}^{\infty}h_{\alpha}(u,t)\frac{\mathrm{d}}{\mathrm{d} u}p^{k}(n,u)\,\mathrm{d}u \nonumber\\
&=p^{k}(n,0)\frac{t^{-\alpha}}{\Gamma(1-\alpha)}+\int_{0}^{\infty}h_{\alpha}(u,t)\frac{\mathrm{d}}{\mathrm{d} u}p^{k}(n,u)\,\mathrm{d}u,
\end{align}
where in the last step we used $h_{\alpha}(0+,t)=t^{-\alpha}/\Gamma(1-\alpha)$ (see Meerschaert and Straka  (2013)).
The following relation holds:
\begin{equation}\label{relation}
\partial_{t}^{\alpha}p^{k}_{\alpha}(n,t)=D_{t}^{\alpha}p^{k}_{\alpha}(n,t)-p^{k}_{\alpha}(n,0)\frac{t^{-\alpha}}{\Gamma(1-\alpha)}.
\end{equation}
From (\ref{ci}), we have $p^{k}_{\alpha}(n,0)=p^{k}(n,0)$ as $h_{\alpha}(u,0)=\delta_0(u)$. Substituting (\ref{skellam gov}) in (\ref{Rl}) and then using it in (\ref{relation}), we get
\begin{align*}
\partial_{t}^{\alpha}p^{k}_{\alpha}(n,t)&=\int_{0}^{\infty}h_{\alpha}(u,t)\frac{\mathrm{d}}{\mathrm{d} u}p^{k}(n,u)\,\mathrm{d}u\\
&=\int_{0}^{\infty}\left(-k(\lambda_{1}+\lambda_{2})p^{k}(n,u)+\lambda_{1}\sum_{j=1}^{k}p^{k}(n-j,u)+\lambda_{2}\sum_{j=1}^{k}p^{k}(n+j,u)\right)h_{\alpha}(u,t)\,\mathrm{d}u\\
&=-k(\lambda_{1}+\lambda_{2})p^{k}_{\alpha}(n,t)+\lambda_{1}\sum_{j=1}^{k}p^{k}_{\alpha}(n-j,t)+\lambda_{2}\sum_{j=1}^{k}p^{k}_{\alpha}(n+j,t).
\end{align*}
This completes the proof.
\end{proof}	
On substituting $k=1$ in (\ref{pmf}), we get the system of  governing differential equations for the state probabilities  $p_{\alpha}(n,t)=\mathrm{Pr}\{\mathcal{S}_{\alpha}(t)=n\},\ n\in \mathbb{Z}$ of FSP  (see Kerss {\it et al.} (2014), Eq. (3.5)) as follows:
\begin{equation*}
\partial_{t}^{\alpha}p_{\alpha}(n,t)=-(\lambda_{1}+\lambda_{2})p_{\alpha}(n,t)+\lambda_{1}p_{\alpha}(n-1,t)+\lambda_{2}p_{\alpha}(n+1,t),
\end{equation*}
with $p_{\alpha}(0,0)=1$ and $p_{\alpha}(n,0)=0, \ n\neq0$.

To state the next result we need two special functions, namely, the Wright function $M_{\alpha}(\cdot)$, $0<\alpha<1$ and the modified Bessel function of first kind $I_{n}(\cdot)$. These are defined as follows (see Mainardi (2010), Abramowitz and  Stegun (1972)):
\begin{equation*}
M_{\alpha}(z)=\sum_{m=0}^{\infty}\frac{(-z)^{m}}{m!\Gamma(1-m\alpha-\alpha)}
\end{equation*}
and
\begin{equation*}
I_{n}(z)=\sum_{m=0}^{\infty}\frac{(z/2)^{2m+n}}{m!(m+n)!}.
\end{equation*}
\begin{theorem}
For $n\in\mathbb{Z}$, the state probability  $p_{\alpha}^{k}(n,t)$ of FSPoK is given by 
\begin{equation*}
p_{\alpha}^{k}(n,t)=
\frac{1}{t^{\alpha}}\left(\frac{\lambda_{1}}{\lambda_{2}}\right)^{n/2}\int_{0}^{\infty}e^{-ku(\lambda_{1}+\lambda_{2})}I_{|n|}\left(2uk\sqrt{\lambda_{1}\lambda_{2}}\right)M_{\alpha}\left(\frac{u}{t^{\alpha}}\right)\mathrm{d}u.
\end{equation*}
\end{theorem}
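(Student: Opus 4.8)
The natural route is the subordination identity (\ref{ci}), $p_{\alpha}^{k}(n,t)=\int_{0}^{\infty}p^{k}(n,u)\,h_{\alpha}(u,t)\,\mathrm{d}u$, which is already in hand from the proof of the Proposition; the theorem then follows by substituting closed forms for the two factors in the integrand. So the plan needs two ingredients: the transition pmf $p^{k}(n,u)$ of the SPoK, and the density $h_{\alpha}(u,t)$ of the inverse stable subordinator $\{Y_{\alpha}(t)\}_{t\ge 0}$.

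For the density I would use the classical Wright-function representation $h_{\alpha}(u,t)=t^{-\alpha}M_{\alpha}\!\big(u\,t^{-\alpha}\big)$, $u\ge 0$ (Mainardi (2010)). This is consistent with the facts $M_{\alpha}(0)=1/\Gamma(1-\alpha)$, $\int_{0}^{\infty}M_{\alpha}(v)\,\mathrm{d}v=1$ and $h_{\alpha}(u,0)=\delta_{0}(u)$ already used in the proof of the Proposition, and if a self-contained derivation is wanted it can be obtained from the relation $h_{\alpha}(u,t)=\tfrac{t}{\alpha}\,u^{-1-1/\alpha}\,g_{\alpha}\!\big(t\,u^{-1/\alpha}\big)$ linking $h_{\alpha}$ with the one-sided $\alpha$-stable density $g_{\alpha}$, by inserting the power series of $g_{\alpha}$ and reindexing into the defining series of $M_{\alpha}$.

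For the pmf of the SPoK I would invoke the closed form of its one-dimensional distribution established in Gupta {\it et al.} (2020), which in terms of the modified Bessel function of the first kind reads $p^{k}(n,u)=e^{-ku(\lambda_{1}+\lambda_{2})}\,(\lambda_{1}/\lambda_{2})^{n/2}\,I_{|n|}\!\big(2uk\sqrt{\lambda_{1}\lambda_{2}}\big)$ for $n\in\mathbb{Z}$; when $k=1$ this is precisely the classical Skellam pmf, recovered by matching the product generating function of $S(u)=N_{1}(u)-N_{2}(u)$ with the Bessel generating identity $\exp\!\big(\tfrac{x}{2}(w+w^{-1})\big)=\sum_{n\in\mathbb{Z}}I_{n}(x)\,w^{n}$ and using $I_{-n}=I_{n}$.

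Substituting both expressions into (\ref{ci}) and pulling the $u$-independent prefactor $t^{-\alpha}(\lambda_{1}/\lambda_{2})^{n/2}$ out of the integral yields exactly the asserted formula, so the computation is routine. The only thing needing a word of care is convergence, and it is immediate: since $e^{-ku(\lambda_{1}+\lambda_{2})}I_{|n|}\!\big(2uk\sqrt{\lambda_{1}\lambda_{2}}\big)=p^{k}(n,u)\in[0,1]$ for every $u\ge 0$, the integrand is dominated by the probability density $u\mapsto h_{\alpha}(u,t)$, so the integral is finite and, by (\ref{ci}), equals $p_{\alpha}^{k}(n,t)$. Thus the whole weight of the argument lies in the two input formulas; the main obstacle is simply having the Bessel-function form of $p^{k}(n,u)$ and the Wright-function form of $h_{\alpha}(u,t)$ available, and with both of them in place the theorem is immediate.
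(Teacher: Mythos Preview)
Your proposal is correct and follows essentially the same route as the paper: it substitutes the Bessel-function pmf of the SPoK from Gupta \textit{et al.} (2020) and the Wright-function density $h_{\alpha}(u,t)=t^{-\alpha}M_{\alpha}(u/t^{\alpha})$ into the subordination identity (\ref{ci}). Your added remark on convergence is a nice touch that the paper omits, but otherwise the arguments are identical.
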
	
\begin{proof}
For $n\in\mathbb{Z}$, the state probability $p^{k}(n,t)$ of SPoK is given by (see Gupta {\it et al.} (2020), Eq. (38)):
\begin{equation}\label{pknt}
p^{k}(n,t)=e^{-kt(\lambda_{1}+\lambda_{2})}\left(\frac{\lambda_{1}}{\lambda_{2}}\right)^{n/2}I_{|n|}\left(2tk\sqrt{\lambda_{1}\lambda_{2}}\right).
\end{equation}
The following result holds (see Meerschaert {\it et al.} (2015), Section 3):
\begin{equation}\label{hat}
h_{\alpha}(u,t)=\frac{1}{t^{\alpha}}M_{\alpha}\left(\frac{u}{t^{\alpha}}\right).
\end{equation}
The result follows on substituting (\ref{pknt}) and (\ref{hat})	in (\ref{ci}).
\end{proof}
The three-parameter Mittag-Leffler function is defined as (see Kilbas {\it et al.} (2006), p. 45)
\begin{equation}\label{mit}
E_{\alpha, \beta}^{\gamma}(x)\coloneqq\frac{1}{\Gamma(\gamma)}\sum_{k=0}^{\infty} \frac{\Gamma(\gamma+k)x^{k}}{k!\Gamma(k\alpha+\beta)},\ \ x\in\mathbb{R},
\end{equation}
where $\alpha>0$, $\beta>0$ and $\gamma>0$. For $\gamma=1$, it reduces to two-parameter Mittag-Leffler function. For  $\gamma=\beta=1$, it further reduces to the Mittag-Leffler function.

In the next result we obtain the pgf of FSPoK.
\begin{proposition}\label{pgf}
The pgf $G^{k}_{\alpha}(\theta,t)=\mathbb{E}\left(\theta^{\mathcal{S}^{k}_{\alpha}(t)}\right)$, $0<\theta<1$, of FSPoK is given by 
\begin{equation*}
G^{k}_{\alpha}(\theta,t)=E_{\alpha,1}\left(-\left(k(\lambda_{1}+\lambda_{2})-\lambda_{1}\sum_{j=1}^{k}\theta^{j}-\lambda_{2}\sum_{j=1}^{k}\theta^{-j}\right)t^{\alpha}\right).
\end{equation*}	
\end{proposition}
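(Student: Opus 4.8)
The plan is to condition on the inverse stable subordinator and thereby reduce the computation to a single Laplace transform. Using the representation $\mathcal{S}^{k}_{\alpha}(t)=S^{k}(Y_{\alpha}(t))$, the independence of $\{S^{k}(t)\}_{t\ge0}$ and $\{Y_{\alpha}(t)\}_{t\ge0}$, and (\ref{hat}),
\begin{equation*}
G^{k}_{\alpha}(\theta,t)=\int_{0}^{\infty}\mathbb{E}\left(\theta^{S^{k}(u)}\right)h_{\alpha}(u,t)\,\mathrm{d}u=\int_{0}^{\infty}\mathbb{E}\left(\theta^{S^{k}(u)}\right)\frac{1}{t^{\alpha}}M_{\alpha}\!\left(\frac{u}{t^{\alpha}}\right)\mathrm{d}u .
\end{equation*}
So the first step is to identify the pgf of the SPoK. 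Since $S^{k}(u)=N_{1}^{k}(u)-N_{2}^{k}(u)$ with the two PPoK independent, and each $N_{i}^{k}(u)$ is a compound sum $\sum_{l=1}^{N_{i}(u)}X_{l}$ of discrete uniform jumps on $\{1,\dots,k\}$ driven by a Poisson process of rate $k\lambda_{i}$, the standard compound-Poisson identity gives $\mathbb{E}(\theta^{N_{i}^{k}(u)})=\exp(\lambda_{i}u(\sum_{j=1}^{k}\theta^{j}-k))$, whence
\begin{equation*}
\mathbb{E}\left(\theta^{S^{k}(u)}\right)=\exp\!\left(-u\left(k(\lambda_{1}+\lambda_{2})-\lambda_{1}\sum_{j=1}^{k}\theta^{j}-\lambda_{2}\sum_{j=1}^{k}\theta^{-j}\right)\right)=e^{-u\psi(\theta)},
\end{equation*}
where $\psi(\theta):=k(\lambda_{1}+\lambda_{2})-\lambda_{1}\sum_{j=1}^{k}\theta^{j}-\lambda_{2}\sum_{j=1}^{k}\theta^{-j}$.

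Substituting this back, $G^{k}_{\alpha}(\theta,t)=\int_{0}^{\infty}e^{-u\psi(\theta)}h_{\alpha}(u,t)\,\mathrm{d}u=\mathbb{E}\big(e^{-\psi(\theta)Y_{\alpha}(t)}\big)$, i.e. the Laplace transform of $Y_{\alpha}(t)$ evaluated at $s=\psi(\theta)$. The second step then invokes the classical identity $\mathbb{E}(e^{-sY_{\alpha}(t)})=E_{\alpha,1}(-st^{\alpha})$ (see Meerschaert {\it et al.} (2011)), which immediately gives $G^{k}_{\alpha}(\theta,t)=E_{\alpha,1}(-\psi(\theta)t^{\alpha})$, that is, the asserted formula once $\psi(\theta)$ is written out.

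The one point requiring care, and the only real obstacle, is that for $0<\theta<1$ one has $\sum_{j=1}^{k}\theta^{-j}>k$, so $\psi(\theta)$ need not be nonnegative, and the Laplace-transform identity must be applied at a possibly negative argument. This is legitimate because, for fixed $t$, $Y_{\alpha}(t)$ has a Mittag-Leffler law whose moment generating function is entire: $E_{\alpha,1}$ is an entire function and $\mathbb{E}(e^{sY_{\alpha}(t)})=E_{\alpha,1}(st^{\alpha})<\infty$ for every $s\in\mathbb{R}$, so the identity extends from $s>0$ to all real $s$; in particular both the convergence of the $u$-integral and the termwise computation are justified even when $\psi(\theta)<0$. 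As an alternative route avoiding the subordinator representation, one may multiply the fractional system (\ref{pmf}) by $\theta^{n}$ and sum over $n\in\mathbb{Z}$: the shift sums collapse, yielding the Caputo relaxation equation $\partial_{t}^{\alpha}G^{k}_{\alpha}(\theta,t)=-\psi(\theta)G^{k}_{\alpha}(\theta,t)$ with $G^{k}_{\alpha}(\theta,0)=1$, whose unique solution is $E_{\alpha,1}(-\psi(\theta)t^{\alpha})$; this version instead needs the termwise application of $\partial_{t}^{\alpha}$ to the (absolutely convergent) pgf series to be justified.
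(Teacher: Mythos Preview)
Your proof is correct and follows essentially the same route as the paper: condition on $Y_\alpha(t)$, insert the SPoK pgf $G^{k}(\theta,u)=e^{-u\psi(\theta)}$ (which the paper simply cites from Gupta \textit{et al.}\ (2020)), and recognise the resulting $u$-integral as the Laplace transform of the inverse stable subordinator, giving $E_{\alpha,1}(-\psi(\theta)t^\alpha)$. Your version is in fact more careful than the paper's, since you flag and resolve the issue that $\psi(\theta)$ can be negative for $0<\theta<1$ (the paper applies the Laplace identity without comment), and your alternative derivation via the Caputo relaxation equation for $G^{k}_{\alpha}$ is exactly what the paper records immediately after the proposition as a consequence rather than a proof.
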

\begin{proof}
The pgf of SPoK is given by (see Gupta {\it et al.} (2020), Eq. (39))
\begin{equation*}
G^{k}(\theta,t)=\exp\left(-t\left(k(\lambda_{1}+\lambda_{2})-\lambda_{1}\sum_{j=1}^{k}\theta^{j}-\lambda_{2}\sum_{j=1}^{k}\theta^{-j}\right)\right).
\end{equation*}	
Using the above result, we get	
\begin{align*}
G^{k}_{\alpha}(\theta,t)&=\int_{0}^{\infty}G^{k}(\theta,u)h_{\alpha}(u,t)\,\mathrm{d}u\\
&=\int_{0}^{\infty}\exp\left(-u\left(k(\lambda_{1}+\lambda_{2})-\lambda_{1}\sum_{j=1}^{k}\theta^{j}-\lambda_{2}\sum_{j=1}^{k}\theta^{-j}\right)\right)h_{\alpha}(u,t)\,\mathrm{d}u\\
&=E_{\alpha,1}\left(-\left(k(\lambda_{1}+\lambda_{2})-\lambda_{1}\sum_{j=1}^{k}\theta^{j}-\lambda_{2}\sum_{j=1}^{k}\theta^{-j}\right)t^{\alpha}\right).
\end{align*}
This completes the proof.	
\end{proof}
Using  the fact that the Mittag-Leffler function is an eigenfunction of the Caputo fractional derivative, it follows that
\begin{equation*}
\partial_{t}^{\alpha}G^{k}_{\alpha}(\theta,t)=-\left(k(\lambda_{1}+\lambda_{2})-\lambda_{1}\sum_{j=1}^{k}\theta^{j}-\lambda_{2}\sum_{j=1}^{k}\theta^{-j}\right)G^{k}_{\alpha}(\theta,t),\ \ G^{k}_{\alpha}(\theta,0)=1.
\end{equation*}
\begin{proposition}
The one-dimensional distributions of FSPoK $\{\mathcal{S}^{k}_{\alpha}(t)\}_{t\ge0}$ are not infinitely divisible.
\end{proposition}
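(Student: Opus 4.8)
The plan is to push the generating‑function identity of Proposition~\ref{pgf} into the complex plane and play it off against the L\'evy--Khinchine description of lattice‑valued infinitely divisible laws; note that $0<\alpha<1$ is understood here, the case $\alpha=1$ giving the L\'evy process $S^{k}$, which is of course infinitely divisible. Throughout write $w(\theta)=\lambda_{1}\sum_{j=1}^{k}\theta^{j}+\lambda_{2}\sum_{j=1}^{k}\theta^{-j}-k(\lambda_{1}+\lambda_{2})$, so that Proposition~\ref{pgf} reads $G^{k}_{\alpha}(\theta,t)=E_{\alpha,1}\!\big(t^{\alpha}w(\theta)\big)$.

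First I would record two preliminary facts. (a) Since $\mathcal S^{k}_{\alpha}(t)=S^{k}(Y_{\alpha}(t))$ and $\mathbb E\big(e^{aY_{\alpha}(t)}\big)=E_{\alpha,1}(at^{\alpha})$ for every real $a$ (both sides are entire in $a$ and agree for $a\le 0$), conditioning on $Y_{\alpha}(t)$ gives $\mathbb E\big(e^{s\mathcal S^{k}_{\alpha}(t)}\big)=E_{\alpha,1}\big(t^{\alpha}w(e^{s})\big)<\infty$ for all $s\in\mathbb R$; hence $\mathcal S^{k}_{\alpha}(t)$ has finite exponential moments of every order, the Laurent series $G^{k}_{\alpha}(\theta,t)=\sum_{n\in\mathbb Z}p^{k}_{\alpha}(n,t)\theta^{n}$ converges for all $\theta\in\mathbb C^{*}:=\mathbb C\setminus\{0\}$, and the identity $G^{k}_{\alpha}(\theta,t)=E_{\alpha,1}(t^{\alpha}w(\theta))$ extends from $(0,1)$ to all of $\mathbb C^{*}$ by the identity theorem. (b) The map $\theta\mapsto t^{\alpha}w(\theta)$ is onto $\mathbb C$: for any $c\in\mathbb C$ the equation $t^{\alpha}w(\theta)=c$ becomes, after multiplication by $\theta^{k}$, a polynomial equation of degree $2k$ with leading coefficient $t^{\alpha}\lambda_{1}$ and constant term $t^{\alpha}\lambda_{2}$, hence has $2k$ roots, all lying in $\mathbb C^{*}$.

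Then I would argue by contradiction. Suppose the law of $\mathcal S^{k}_{\alpha}(t)$ is infinitely divisible. It is concentrated on $\mathbb Z$ and has all exponential moments, so its cumulant generating function must have the form $\log\mathbb E\big(e^{s\mathcal S^{k}_{\alpha}(t)}\big)=\beta s+\sum_{m\neq 0}\nu_{m}(e^{ms}-1)$ with $\beta\in\mathbb Z$, $\nu_{m}\ge 0$, and, by finiteness of all exponential moments, with $\nu_{m}$ decaying faster than any geometric rate; putting $\theta=e^{s}$ this yields $G^{k}_{\alpha}(\theta,t)=\theta^{\beta}\exp\!\big(\sum_{m\neq 0}\nu_{m}(\theta^{m}-1)\big)$ on all of $\mathbb C^{*}$, a function that is nowhere zero there. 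By fact (a), $E_{\alpha,1}(t^{\alpha}w(\theta))\neq 0$ for every $\theta\in\mathbb C^{*}$. On the other hand, for $0<\alpha<1$ the Mittag--Leffler function $E_{\alpha,1}$ is a non‑constant entire function of order $1/\alpha>1$ which is not $e^{g}$ for any polynomial $g$, hence has a zero $c_{0}\in\mathbb C$ (in fact infinitely many, clustering around the rays $\arg z=\pm\alpha\pi/2$). By fact (b) there is $\theta_{0}\in\mathbb C^{*}$ with $t^{\alpha}w(\theta_{0})=c_{0}$, so $E_{\alpha,1}(t^{\alpha}w(\theta_{0}))=0$, a contradiction. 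Hence the one‑dimensional distributions of the FSPoK are not infinitely divisible.

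The computations in (a) and (b) are routine (a conditioning argument and a degree/constant‑term count). The step I expect to be the real point is the passage from $|\theta|=1$ to all of $\mathbb C^{*}$: this is exactly where finiteness of every exponential moment of $\mathcal S^{k}_{\alpha}(t)$ is essential — it forces the L\'evy measure to decay super‑exponentially, makes the L\'evy--Khinchine exponent an entire function of $s$, and thereby makes $G^{k}_{\alpha}(\cdot,t)$ zero‑free on all of $\mathbb C^{*}$ rather than merely on the unit circle (on the circle itself the argument $t^{\alpha}w(\theta)$ stays in the closed left half‑plane, where $E_{\alpha,1}$ has no zeros, so one genuinely needs complex $\theta$). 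For the existence of zeros of $E_{\alpha,1}$ when $0<\alpha<1$ I would cite a standard reference (e.g.\ Wiman's theorem, or Popov--Sedletskii) rather than reprove it.
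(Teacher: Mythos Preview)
Your proof is correct and takes a genuinely different route from the paper's. The paper argues probabilistically: using self-similarity $Y_\alpha(t)\stackrel{d}{=}t^\alpha Y_\alpha(1)$ and the law of large numbers $N^{k}(t)/t\to k(k+1)\lambda/2$, it shows that $\mathcal{S}^k_\alpha(t)/t^\alpha \to \tfrac{k(k+1)}{2}(\lambda_1-\lambda_2)\,Y_\alpha(1)$ in distribution, then invokes the known non-infinite-divisibility of $Y_\alpha(1)$ together with closure of the ID class under scaling and weak limits. You instead work analytically: extend the pgf to all of $\mathbb{C}^*$ via the super-exponential tails inherited from $Y_\alpha(t)$, note that a lattice ID law with all exponential moments must have pgf of the form $\theta^\beta\exp\bigl(\sum_m\nu_m(\theta^m-1)\bigr)$ and hence be zero-free on $\mathbb{C}^*$, and then manufacture a zero by composing a zero of $E_{\alpha,1}$ (classical for $0<\alpha<1$) with the surjectivity of $\theta\mapsto w(\theta)$. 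The paper's route is short once the cited result for $Y_\alpha(1)$ is in hand and makes the role of the inverse-stable clock transparent, but its limiting argument degenerates when $\lambda_1=\lambda_2$ (the limit becomes the point mass at $0$); your route is insensitive to that symmetry, is essentially self-contained modulo the classical zeros of $E_{\alpha,1}$, and exhibits a direct obstruction (a zero of the pgf) rather than an asymptotic one. Both are valid; you have traded a cited probabilistic lemma for a cited analytic one.
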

\begin{proof}
From (\ref{skellam}) and (\ref{FSPoK}),  we get
\begin{align*}
\mathcal{S}^{k}_{\alpha}(t)&=N_{1}^{k}\left(Y_{\alpha}(t)\right)-N_{2}^{k}\left(Y_{\alpha}(t)\right)\\
&\overset{d}{=}N_{1}^{k}\left(t^{\alpha}Y_{\alpha}(1)\right)-N_{2}^{k}\left(t^{\alpha}Y_{\alpha}(1)\right),
\end{align*}
where $\overset{d}{=}$ means equal in distribution. Here, we have used the self-similarity property of	$\{Y_{\alpha}(t)\}_{t\ge0}$.
The following result holds for PPoK (see Sengar {\it et al.} (2020), Eq. (9)):
\begin{equation}\label{limit}
\frac{N^{k}(t)}{t}\to\frac{k(k+1)}{2}\lambda,\ \ \mathrm{in\  probability} \  \text{as}\ t\to\infty.
\end{equation}
Thus, $N^{k}(t)/t \to k(k+1)\lambda/2$, in distribution as $t\to \infty$. Therefore,
\begin{align*}
\lim_{t\to\infty}\frac{\mathcal{S}^{k}_{\alpha}(t)}{t^{\alpha}}&=\lim_{t\to\infty}\frac{N_{1}^{k}\left(t^{\alpha}Y_{\alpha}(1)\right)-N_{2}^{k}\left(t^{\alpha}Y_{\alpha}(1)\right)}{t^{\alpha}},\ \ \mathrm{in\  distribution}\\
&=Y_{\alpha}(1)\left(\lim_{t\to\infty}\frac{N_{1}^{k}\left(t^{\alpha}Y_{\alpha}(1)\right)}{t^{\alpha}Y_{\alpha}(1)}-\lim_{t\to\infty}\frac{N_{2}^{k}\left(t^{\alpha}Y_{\alpha}(1)\right)}{t^{\alpha}Y_{\alpha}(1)}\right)\\
&=\frac{k(k+1)}{2}(\lambda_{1}-\lambda_{2})Y_{\alpha}(1),\ \ \mathrm{in\  distribution}.
\end{align*}

It is known that $Y_{\alpha}(1)$ is not infinitely divisible (see Vellaisamy and Kumar (2018)). Let  $\mathcal{S}^{k}_{\alpha}(t)$ be infinitely divisible. Then, it follows that $\mathcal{S}^{k}_{\alpha}(t)/t^{\alpha}$ is  infinitely divisible (see Steutel and van
Harn (2004), Proposition 2.1). As the limit of a sequence of infinitely divisible random variables 
is infinitely divisible (see Steutel
and van Harn (2004), Proposition 2.2), it follows that  $Y_{\alpha}(1)$ is infinitely divisible. This leads to a contradiction.
\end{proof}

Let us assume that
\begin{equation}\label{r1r2}
r_{1}=k(k+1)(\lambda_{1}-\lambda_{2})/2\ \  \text{and}\ \  r_{2}=k(k+1)(2k+1)(\lambda_{1}+\lambda_{2})/6.
\end{equation}
The mean, variance and covariance function of $\{S^{k}(t)\}_{t\ge0}$ are given by (see Gupta {\it et al.} (2020))
\begin{equation}\label{meanvarskt}
\mathbb{E}\left(S^{k}(t)\right)=r_{1}t,\ \ \operatorname{Var}\left(S^{k}(t)\right)=r_{2}t \ \ \text{and}\ \  \operatorname{Cov}\left(S^{k}(s),S^{k}(t)\right)=r_{2}s,\ \ 0<s\leq t.
\end{equation}

The mean, variance and  covariance of FSPoK are
\begin{align}
\mathbb{E}\left(\mathcal{S}^{k}_{\alpha}(t)\right)&=r_{1}\mathbb{E}\left(Y_{\alpha}(t)\right),\label{es}\\
\operatorname{Var}\left(\mathcal{S}^{k}_{\alpha}(t)\right)&=r_{2}\mathbb{E}\left(Y_{\alpha}(t)\right)+r_{1}^{2}\operatorname{Var}\left(Y_{\alpha}(t)\right),\label{vs}\\	
\operatorname{Cov}\left(\mathcal{S}^{k}_{\alpha}(s),\mathcal{S}^{k}_{\alpha}(t)\right)&=r_{2}\mathbb{E}\left(Y_{\alpha}(s)\right)+r_{1}^{2}\operatorname{Cov}\left(Y_{\alpha}(s),Y_{\alpha}(t)\right),\label{cs}
\end{align}
which are obtained by using Theorem 2.1 of Leonenko {\it et al.} (2014).
\begin{remark}
On substituting $k=1$ in (\ref{es})-(\ref{cs}), we get the  mean, variance and covariance  of FSP (see Kerss {\it et al.} (2014), Remark 3.2).
\end{remark}
Next, we show that the FSPoK possesses the LRD property. The following definition  will be used (see D'Ovidio and Nane (2014), Maheshwari and Vellaisamy (2016)):
\begin{definition}
Let $s>0$ be fixed and $\{X(t)\}_{t\ge0}$ be a stochastic process such that
\begin{equation*}
\lim_{t\to\infty}\frac{\operatorname{Cov}(X(s),X(t))}{\sqrt{\operatorname{Var}(X(s))}\sqrt{\operatorname{Var}(X(t))}t^{-\gamma}}= c(s).
\end{equation*}
If $\gamma\in(0,1)$	the process $\{X(t)\}_{t\ge0}$ has the LRD property, and if $\gamma\in(1,2)$ it has the SRD property.
\end{definition}
\begin{theorem}
The	FSPoK has the LRD property.
\end{theorem}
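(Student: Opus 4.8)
The plan is to reduce the statement to the known second-order structure of the inverse stable subordinator $\{Y_{\alpha}(t)\}_{t\ge0}$ and then check the displayed limit in the definition above directly from (\ref{vs}) and (\ref{cs}). So I would first recall that $\mathbb{E}(Y_{\alpha}(t))=t^{\alpha}/\Gamma(1+\alpha)$ and, by the self-similarity $Y_{\alpha}(t)\overset{d}{=}t^{\alpha}Y_{\alpha}(1)$,
\begin{equation*}
\operatorname{Var}(Y_{\alpha}(t))=c_{\alpha}t^{2\alpha},\qquad c_{\alpha}:=\frac{2}{\Gamma(1+2\alpha)}-\frac{1}{\Gamma(1+\alpha)^{2}}>0\quad(0<\alpha<1),
\end{equation*}
the positivity being just the non-degeneracy of $Y_{\alpha}(1)$. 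I would also use the covariance of the inverse stable subordinator: for $0<s\le t$,
\begin{equation*}
\operatorname{Cov}(Y_{\alpha}(s),Y_{\alpha}(t))=\frac{1}{\Gamma(1+\alpha)^{2}}\left(\alpha\int_{0}^{s}\bigl((t-\tau)^{\alpha}+(s-\tau)^{\alpha}\bigr)\tau^{\alpha-1}\,\mathrm{d}\tau-(st)^{\alpha}\right)
\end{equation*}
(see Leonenko {\it et al.} (2014)).

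The one step that needs genuine care, and which I expect to be the main obstacle, is the large-$t$ asymptotics of this covariance for a fixed $s$. Writing $(t-\tau)^{\alpha}=t^{\alpha}\bigl(1-\alpha\tau/t+O(t^{-2})\bigr)$ under the integral and integrating term by term, the leading contribution $\alpha t^{\alpha}\int_{0}^{s}\tau^{\alpha-1}\,\mathrm{d}\tau=s^{\alpha}t^{\alpha}$ cancels exactly against the term $-(st)^{\alpha}$, so that
\begin{equation*}
\operatorname{Cov}(Y_{\alpha}(s),Y_{\alpha}(t))=\frac{s^{2\alpha}}{\Gamma(1+2\alpha)}-\frac{\alpha^{2}s^{\alpha+1}}{(\alpha+1)\Gamma(1+\alpha)^{2}}\,t^{\alpha-1}+O(t^{\alpha-2})\ \longrightarrow\ \frac{s^{2\alpha}}{\Gamma(1+2\alpha)}
\end{equation*}
as $t\to\infty$. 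The key point to emphasize is this cancellation of the $t^{\alpha}$ terms: it keeps $\operatorname{Cov}(Y_{\alpha}(s),Y_{\alpha}(t))$ bounded in $t$ (indeed convergent), whereas $\operatorname{Var}(Y_{\alpha}(t))$ grows like $t^{2\alpha}$, and this mismatch is exactly what produces the polynomial decay of the correlation.

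It then remains to substitute into (\ref{vs})--(\ref{cs}). Since $r_{2}>0$ and $\mathbb{E}(Y_{\alpha}(s))$ does not depend on $t$, for fixed $s>0$ one gets
\begin{equation*}
\operatorname{Cov}\bigl(\mathcal{S}^{k}_{\alpha}(s),\mathcal{S}^{k}_{\alpha}(t)\bigr)\ \longrightarrow\ L(s):=\frac{r_{2}s^{\alpha}}{\Gamma(1+\alpha)}+\frac{r_{1}^{2}s^{2\alpha}}{\Gamma(1+2\alpha)}\in(0,\infty),\qquad t\to\infty,
\end{equation*}
while $\operatorname{Var}\bigl(\mathcal{S}^{k}_{\alpha}(t)\bigr)=r_{2}t^{\alpha}/\Gamma(1+\alpha)+r_{1}^{2}c_{\alpha}t^{2\alpha}$. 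If $\lambda_{1}\ne\lambda_{2}$, then $r_{1}\ne0$, the quadratic term dominates, $\sqrt{\operatorname{Var}(\mathcal{S}^{k}_{\alpha}(t))}\sim|r_{1}|\sqrt{c_{\alpha}}\,t^{\alpha}$, and with $\gamma=\alpha$,
\begin{equation*}
\frac{\operatorname{Cov}\bigl(\mathcal{S}^{k}_{\alpha}(s),\mathcal{S}^{k}_{\alpha}(t)\bigr)}{\sqrt{\operatorname{Var}(\mathcal{S}^{k}_{\alpha}(s))}\,\sqrt{\operatorname{Var}(\mathcal{S}^{k}_{\alpha}(t))}\,t^{-\gamma}}\ \longrightarrow\ \frac{L(s)}{|r_{1}|\sqrt{c_{\alpha}\operatorname{Var}(\mathcal{S}^{k}_{\alpha}(s))}}=:c(s)\in(0,\infty);
\end{equation*}
since $\gamma=\alpha\in(0,1)$, the definition above yields the LRD property. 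In the degenerate case $\lambda_{1}=\lambda_{2}$ one has $r_{1}=0$, $\operatorname{Var}(\mathcal{S}^{k}_{\alpha}(t))=r_{2}t^{\alpha}/\Gamma(1+\alpha)$, and the identical computation with $\gamma=\alpha/2\in(0,1)$ again gives LRD, so the conclusion holds in all cases. Everything in this last paragraph is routine substitution; the only real content is the bounded-covariance estimate of the preceding step.
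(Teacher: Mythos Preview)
Your argument is correct and follows the same underlying route as the paper: feed the second-order asymptotics of $\{Y_{\alpha}(t)\}$ into (\ref{vs})--(\ref{cs}) and read off the polynomial decay of the correlation. The paper simply invokes a result of Leonenko {\it et al.} (2014) to obtain (\ref{mnt}) with $\gamma=\alpha$ in one stroke, whereas you carry out the covariance asymptotic $\operatorname{Cov}(Y_{\alpha}(s),Y_{\alpha}(t))\to s^{2\alpha}/\Gamma(1+2\alpha)$ by hand via the cancellation of the $t^{\alpha}$ terms. One point where your version is actually more complete: the paper's constant $c(s)$ carries $r_{1}^{2}$ in a denominator and so tacitly assumes $\lambda_{1}\neq\lambda_{2}$; your separate handling of $r_{1}=0$ (with $\gamma=\alpha/2$) covers the symmetric case the paper's formula does not.
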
	
\begin{proof}
For a fixed $s>0$, we have 
\begin{equation}\label{mnt}
\lim_{t\to\infty}\frac{\operatorname{Cov}\left(\mathcal{S}^{k}_{\alpha}(s),\mathcal{S}^{k}_{\alpha}(t)\right)}{\sqrt{\operatorname{Var}\left(\mathcal{S}^{k}_{\alpha}(t)\right)}\sqrt{\operatorname{Var}\left(\mathcal{S}^{k}_{\alpha}(t)\right)}t^{-\alpha}}=c(s),
\end{equation}
where 
\begin{equation*}
c(s)=\left(\frac{1}{\Gamma(2\alpha)}  -  \frac{1}{ \alpha(\Gamma(\alpha))^2}\right)^{-1}
\left(\frac{\alpha r_{2} }{\Gamma(1+\alpha)  r_{1}^2}   +\frac{ \alpha s^{\alpha}}{\Gamma(1+2\alpha)}     \right).
\end{equation*}
The result in (\ref{mnt}) follows by using a result on p. $10$ of Leonenko {\it et al.} (2014), and the mean and variance of SPoK which are given in (\ref{meanvarskt}). Thus, the FSPoK exhibits the LRD property as $0<\alpha<1$.  
\end{proof}
\begin{remark}
For a fixed $h>0$, the increment of FSPoK is defined as
\begin{equation*}
Z^{k}_{h}(t)=\mathcal{S}^{k}_{\alpha}(t+h)-\mathcal{S}^{k}_{\alpha}(t),\ \ t\ge0.
\end{equation*}
It can be shown that the increment process $\{Z^{k}_{h}(t)\}_{t\ge0}$ exhibits the SRD property. The proof follows similar lines to that of Theorem 1 of Maheshwari and Vellaisamy (2016), and thus it is omitted.
\end{remark}
 
\section{The FSPoK time-changed by a L\'evy subordinator}
In this section, we consider a time-changed version of the FSPoK. We call it the time-changed  fractional Skellam process of order $k$ (TCFSPoK) and denote it by $\{\mathcal{Z}^{f}_{\alpha}
(t)\}_{t\ge0}$, $0<\alpha\le1$. It is defined as the  FSPoK time-changed by an independent L\'evy subordinator $\{D_f (t)\}_{t\ge0}$ with $\mathbb{E}\left(D^r_f(t)\right)<\infty$ for all $r>0$.  Thus,
\begin{equation}\label{qws11ww1}
\mathcal{Z}^{f}_{\alpha}(t)\coloneqq \mathcal{S}^{k}_{\alpha}(D_f (t)),\ \ t\ge0,
\end{equation}
where $\{\mathcal{S}^{k}_{\alpha}(t)\}_{t\ge0}$ is independent of $\{D_f (t)\}_{t\ge0}$. 

For $\alpha=1$, the TCFSPoK reduces to  a time-changed version of the SPoK (see Gupta {\it et al.} (2020)), that is, 
\begin{equation*}
\mathcal{Z}^{f}(t)\coloneqq \mathcal{S}^{k}(D_f (t)),\ \ t\ge0.
\end{equation*}
Its pmf $p^{f}(n,t)=\mathrm{Pr}\{\mathcal{Z}^{f}(t)=n\}$  is given by (see Gupta {\it et al.} (2020), Eq. (47))
\begin{equation*}
p^{f}(n,t)=\sum_{x=\max(0,-n)}^{\infty}\frac{(k\lambda_{1})^{n+x}(k\lambda_{2})^{x}}{(n+x)!x!}\mathbb{E}\left(e^{-k(\lambda_{1}+\lambda_{2})D_{f}(t)}D_{f}^{2n+x}(t)\right),\ \ n\in \mathbb{Z}.
\end{equation*}

The mean and covariance of inverse stable  subordinator are given by (see Leonenko {\it et al.} (2014), Eq. (8) and Eq. (10))
\begin{equation}\label{meani}
\mathbb{E}\left(Y_{\alpha}(t)\right)=\frac{t^{\alpha}}{\Gamma(\alpha+1)}
\end{equation}
and 
\begin{equation}\label{covin}
\operatorname{Cov}\left(Y_{\alpha}(s),Y_{\alpha}(t)\right)=\frac{1}{\Gamma^2(\alpha+1)}\left( \alpha s^{2\alpha}B(\alpha,\alpha+1)+F(\alpha;s,t)\right),\ \ 0<s\le t,
\end{equation}
where $F(\alpha;s,t)=\alpha t^{2\alpha}B(\alpha,\alpha+1;s/t)-(ts)^{\alpha}$. Here, $B(\alpha,\alpha+1)$  and $B(\alpha,\alpha+1;s/t)$ denote the beta function and the incomplete beta function, respectively.

Let $l_{1}=r_{1}/\Gamma(\alpha+1)$, $l_{2}=r_{2}/\Gamma(\alpha+1)$ and  $d=l_{1}^{2}\alpha B(\alpha,\alpha+1)$ where $r_{1}$ and $r_{2}$ are given by (\ref{r1r2}).

The mean of TCFSPoK is obtained as follows:
\begin{equation}\label{mean}
\mathbb{E}\left(\mathcal{Z}^{f}_{\alpha}(t)\right)=\mathbb{E}\left(\mathbb{E}\left(\mathcal{S}^{k}_{\alpha}(D_{f}(t))|D_{f}(t)\right)\right)=l_{1}\mathbb{E}\left(D_{f}^{\alpha}(t)\right).	
\end{equation}
On substituting  (\ref{meani}) and (\ref{covin}) in (\ref{cs}), we get
\begin{equation*}
\mathbb{E}\left(\mathcal{S}^{k}_{\alpha}(s)\mathcal{S}^{k}_{\alpha}(t)\right)=l_{2}s^{\alpha}+ds^{2\alpha}+\alpha l_{1}^{2}\left(t^{2\alpha}B(\alpha,\alpha+1;s/t)\right).
\end{equation*}
Thus,
\begin{align*}
\mathbb{E}\left(\mathcal{Z}^{f}_{\alpha}(s)\mathcal{Z}^{f}_{\alpha}(t)\right)&=\mathbb{E}\left(\mathbb{E}\left(\mathcal{S}^{k}_{\alpha}(D_{f}(s))\mathcal{S}^{k}_{\alpha}(D_{f}(t))|D_{f}(s),D_{f}(t)\right)\right)\\
&=l_{2}\mathbb{E}\left(D_{f}^{\alpha}(s)\right)+ d\mathbb{E}\left(D_{f}^{2\alpha}(s)\right) +\alpha l_{1}^{2}  \mathbb{E}\left(D_{f}^{2\alpha}(t)B\left(\alpha,\alpha+1;D_{f}(s)/D_{f}(t)\right)\right).
\end{align*}
Hence, the covariance of TCFSPoK is given by
\begin{align}\label{cov}
\operatorname{Cov}\left(\mathcal{Z}^{f}_{\alpha}(s),\mathcal{Z}^{f}_{\alpha}(t)\right)&=l_{2}\mathbb{E}\left(D_{f}^{\alpha}(s)\right)+ d\mathbb{E}\left(D_{f}^{2\alpha}(s)\right)-l_{1}^{2}\mathbb{E}\left(D_{f}^{\alpha}(s)\right)\mathbb{E}\left(D_{f}^{\alpha}(t)\right)\nonumber\\
&\ \ 
+ \alpha l_{1}^{2}  \mathbb{E}\left(D_{f}^{2\alpha}(t)B\left(\alpha,\alpha+1;D_{f}(s)/D_{f}(t)\right)\right).
\end{align}
Also, its variance is obtained by substituting $s=t$ in (\ref{cov}) which is given by
\begin{equation}\label{var}
\operatorname{Var}\left(\mathcal{Z}^{f}_{\alpha}(t)\right)=\mathbb{E}\left(D_{f}^{\alpha}(t)\right)\left(l_{2}-l_{1}^{2}\mathbb{E}\left(D_{f}^{\alpha}(t)\right)\right)+2d\mathbb{E}\left(D_{f}^{2\alpha}(t)\right).
\end{equation}
\begin{remark}
	On substituting $k=1$ and taking $\lambda_{2}\to 0$ in (\ref{mean})-(\ref{var}), we get the mean, covariance and variance of a time-changed version of the fractional Poisson process (see Maheshwari and Vellaisamy (2019), Theorem 3.2).
\end{remark}
Next we show that the TCFSPoK exhibits the LRD property under certain restrictions on the L\'evy subordinator. For positive functions $f$ and $g$, the notation  $f\sim g$ stands for $f(t)/g(t)\to 1$ as $t\to\infty$.

\begin{theorem}
Let $\{D_f(t)\}_{t\ge0}$ be a L\'evy subordinator with $f(\cdot)$ being its associated  Bern\v stein function defined in (\ref{fs}) such that $\mathbb{E}\left(D^r_f(t)\right)<\infty$ for all $r>0$. The TCFSPoK $\{\mathcal{Z}^{f}_{\alpha}(t)\}_{t\ge0}$, $0<\alpha<1$ exhibits the LRD property if 
\begin{equation}\label{12}
\mathbb{E}\left(D_f^{i\alpha}(t)\right)\sim k_it^{i\rho},\ \ i=1,2,
\end{equation}
for some  $0< \rho <1$,  and positive  constants $k_1$ and $k_2$ such that $k_2\geq k_1^2$. 
\end{theorem}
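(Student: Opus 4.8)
The plan is to verify the defining limit of the LRD property directly from the closed forms \eqref{cov} and \eqref{var} together with the hypothesis \eqref{12}, the scaling exponent being $\gamma=\rho$. First I would handle the variance: expanding \eqref{var} as $\operatorname{Var}(\mathcal{Z}^{f}_{\alpha}(t))=l_2\mathbb{E}(D_f^{\alpha}(t))-l_1^{2}(\mathbb{E}(D_f^{\alpha}(t)))^{2}+2d\mathbb{E}(D_f^{2\alpha}(t))$ and inserting $\mathbb{E}(D_f^{\alpha}(t))\sim k_1t^{\rho}$, $\mathbb{E}(D_f^{2\alpha}(t))\sim k_2t^{2\rho}$ (together with $2d=2l_1^{2}\alpha B(\alpha,\alpha+1)$), the linear term $l_2\mathbb{E}(D_f^{\alpha}(t))$ is only $O(t^{\rho})$ and is swamped by the two $O(t^{2\rho})$ terms, so $\operatorname{Var}(\mathcal{Z}^{f}_{\alpha}(t))\sim(2dk_2-l_1^{2}k_1^{2})t^{2\rho}$. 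The leading constant is strictly positive: $2\alpha B(\alpha,\alpha+1)=2\Gamma(\alpha+1)^{2}/\Gamma(2\alpha+1)>1$ — this inequality being exactly $\operatorname{Var}(Y_{\alpha}(1))=2/\Gamma(2\alpha+1)-1/\Gamma(\alpha+1)^{2}>0$ for $0<\alpha<1$ — whence $k_2\ge k_1^{2}$ gives $2\alpha B(\alpha,\alpha+1)k_2>k_2\ge k_1^{2}$, i.e. $2dk_2-l_1^{2}k_1^{2}>0$, and $\sqrt{\operatorname{Var}(\mathcal{Z}^{f}_{\alpha}(t))}\sim\sqrt{2dk_2-l_1^{2}k_1^{2}}\;t^{\rho}$. (This is what the condition $k_2\ge k_1^{2}$ provides; the degenerate case $\lambda_1=\lambda_2$, in which $l_1=d=0$ and $\operatorname{Var}(\mathcal{Z}^{f}_{\alpha}(t))\sim l_2k_1t^{\rho}$, runs the same way with $\gamma=\rho/2$.)

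The substance is the covariance, which I would show converges, for fixed $s>0$, to a finite positive constant as $t\to\infty$. The algebraic device is the identity $\alpha y^{2\alpha}B(\alpha,\alpha+1;x/y)=F(\alpha;x,y)+(xy)^{\alpha}$ — precisely the definition of $F$ given just after \eqref{covin} — applied with $x=D_f(s)$ and $y=D_f(t)$, which is legitimate since $0<D_f(s)\le D_f(t)$ a.s. Substituting it into \eqref{cov} collapses the last two terms there into $l_1^{2}\mathbb{E}(F(\alpha;D_f(s),D_f(t)))+l_1^{2}\operatorname{Cov}(D_f^{\alpha}(s),D_f^{\alpha}(t))$, so that
\begin{align*}
\operatorname{Cov}\left(\mathcal{Z}^{f}_{\alpha}(s),\mathcal{Z}^{f}_{\alpha}(t)\right)&=l_2\mathbb{E}\left(D_f^{\alpha}(s)\right)+d\mathbb{E}\left(D_f^{2\alpha}(s)\right)\\
&\quad+l_1^{2}\mathbb{E}\left(F(\alpha;D_f(s),D_f(t))\right)+l_1^{2}\operatorname{Cov}\left(D_f^{\alpha}(s),D_f^{\alpha}(t)\right),
\end{align*}
the first two summands forming a $t$-free constant $c_0(s)\coloneqq l_2\mathbb{E}(D_f^{\alpha}(s))+d\mathbb{E}(D_f^{2\alpha}(s))>0$. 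It then remains to show the last two summands vanish as $t\to\infty$. For this I would use that $\{D_f(t)\}_{t\ge0}$ has stationary independent increments: write $\widetilde{D}(t)\coloneqq D_f(t)-D_f(s)$, which is independent of $D_f(s)$, distributed as $D_f(t-s)$, and satisfies $\widetilde{D}(t)\to\infty$ (equivalently $D_f(t)\to\infty$) a.s. The elementary estimates $(1-x)^{\alpha}x^{\alpha}/\alpha\le B(\alpha,\alpha+1;x)\le x^{\alpha}/\alpha$ and $1-(1-x)^{\alpha}\le x$ give $0\le-F(\alpha;D_f(s),D_f(t))\le D_f^{\alpha+1}(s)D_f^{\alpha-1}(t)\le D_f^{2\alpha}(s)$, so $F(\alpha;D_f(s),D_f(t))\to0$ a.s. and is dominated by the integrable, $t$-free $D_f^{2\alpha}(s)$; dominated convergence then gives $\mathbb{E}(F(\alpha;D_f(s),D_f(t)))\to0$. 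For the covariance summand, subadditivity and concavity of $u\mapsto u^{\alpha}$ give $0\le D_f^{\alpha}(t)-\widetilde{D}(t)^{\alpha}\le\min\{D_f^{\alpha}(s),\,\alpha D_f(s)\widetilde{D}(t)^{\alpha-1}\}$, so this quantity $\to0$ a.s. while staying bounded by the integrable $D_f^{\alpha}(s)$; since $\widetilde{D}(t)^{\alpha}$ is independent of $D_f(s)$, one has $\operatorname{Cov}(D_f^{\alpha}(s),D_f^{\alpha}(t))=\operatorname{Cov}(D_f^{\alpha}(s),D_f^{\alpha}(t)-\widetilde{D}(t)^{\alpha})\to0$ by dominated convergence (all moments of $D_f$ are finite by assumption). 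Hence $\operatorname{Cov}(\mathcal{Z}^{f}_{\alpha}(s),\mathcal{Z}^{f}_{\alpha}(t))\to c_0(s)$.

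Combining the two asymptotics, for fixed $s>0$,
\begin{equation*}
\lim_{t\to\infty}\frac{\operatorname{Cov}\left(\mathcal{Z}^{f}_{\alpha}(s),\mathcal{Z}^{f}_{\alpha}(t)\right)}{\sqrt{\operatorname{Var}\left(\mathcal{Z}^{f}_{\alpha}(s)\right)}\sqrt{\operatorname{Var}\left(\mathcal{Z}^{f}_{\alpha}(t)\right)}\,t^{-\rho}}=\frac{c_0(s)}{\sqrt{\operatorname{Var}\left(\mathcal{Z}^{f}_{\alpha}(s)\right)\left(2dk_2-l_1^{2}k_1^{2}\right)}}\eqqcolon c(s)\in(0,\infty),
\end{equation*}
so, $\gamma=\rho$ lying in $(0,1)$, the definition yields the LRD property. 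The main obstacle I foresee is exactly the step of proving that $l_1^{2}\mathbb{E}(F(\alpha;D_f(s),D_f(t)))$ and $l_1^{2}\operatorname{Cov}(D_f^{\alpha}(s),D_f^{\alpha}(t))$ genuinely vanish — rather than merely staying bounded — since it is this that forces $\gamma=\rho$; it requires the two-sided estimates on $B(\alpha,\alpha+1;\cdot)$, the inequalities for $u\mapsto u^{\alpha}$, and the independent-increments splitting above to make dominated convergence applicable. The rest — the $O(t^{2\rho})$ asymptotics of \eqref{var} and the rearrangement of \eqref{cov} via $F$ — is routine.
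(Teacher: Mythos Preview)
Your proof is correct and reaches exactly the same limiting constant as the paper, namely $c_0(s)=l_2\mathbb{E}(D_f^{\alpha}(s))+d\mathbb{E}(D_f^{2\alpha}(s))$ in the numerator and $\sqrt{2dk_2-l_1^{2}k_1^{2}}$ in the denominator, but the route is genuinely different. The paper does not analyse the incomplete beta term directly; instead it invokes, as a black box, the asymptotic $\alpha\,\mathbb{E}\bigl(D_f^{2\alpha}(t)\,B(\alpha,\alpha+1;D_f(s)/D_f(t))\bigr)\sim\mathbb{E}(D_f^{\alpha}(s))\,\mathbb{E}(D_f^{\alpha}(t-s))$ from Maheshwari and Vellaisamy (2019, Theorem~3.3), combines this with $\mathbb{E}(D_f^{\alpha}(t))\sim k_1t^{\rho}$, and then expands $t^{\rho}-(t-s)^{\rho}\sim s\rho\,t^{\rho-1}\to0$. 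Your approach is more self-contained: you use the identity $\alpha y^{2\alpha}B(\alpha,\alpha+1;x/y)=F(\alpha;x,y)+(xy)^{\alpha}$ to split the problematic piece of \eqref{cov} into $l_1^{2}\mathbb{E}(F(\alpha;D_f(s),D_f(t)))$ and $l_1^{2}\operatorname{Cov}(D_f^{\alpha}(s),D_f^{\alpha}(t))$, and then dispatch both by dominated convergence together with the two-sided bounds on $B(\alpha,\alpha+1;\cdot)$ and the independent-increments splitting $D_f(t)=D_f(s)+\widetilde{D}(t)$. What this buys you is that no external lemma is needed, and along the way you also make explicit why $2dk_2-l_1^{2}k_1^{2}>0$ (via $2\alpha B(\alpha,\alpha+1)>1$ and $k_2\ge k_1^{2}$) and you cover the degenerate case $\lambda_1=\lambda_2$, both of which the paper leaves implicit. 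The paper's approach is shorter on the page precisely because the hard step is outsourced to the cited result.
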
	
\begin{proof}
Let $0<s<t$. In (\ref{cov}), we use the following asymptotic result for large $t$ (see Maheshwari and Vellaisamy (2019), Theorem 3.3)
\begin{equation*}
\alpha\mathbb{E}\left(D_f^{2\alpha}(t)B\left(\alpha,\alpha+1;D_f(s)/D_f(t)\right)\right)\sim \mathbb{E}\left(D_f^{\alpha}(s)\right)\E\left(D_f^{\alpha}(t-s)\right),
\end{equation*}
to obtain
\begin{align*}
\operatorname{Cov}\left(\mathcal{Z}^{f}_{\alpha}(s),\mathcal{Z}^{f}_{\alpha}(t)\right)&\sim l_{2}\mathbb{E}\left(D_{f}^{\alpha}(s)\right)+ d\mathbb{E}\left(D_{f}^{2\alpha}(s)\right)-l_{1}^{2}\mathbb{E}\left(D_{f}^{\alpha}(s)\right)\mathbb{E}\left(D_{f}^{\alpha}(t)\right)\\
&\ \ \ 
+l_{1}^{2}  \mathbb{E}\left(D_f^{\alpha}(s)\right)\E\left(D_f^{\alpha}(t-s)\right)\\
&\sim l_{2}\mathbb{E}\left(D_{f}^{\alpha}(s)\right)+ d\mathbb{E}\left(D_{f}^{2\alpha}(s)\right)-l_{1}^{2}  \mathbb{E}\left(D_f^{\alpha}(s)\right)k_1(t^\rho-(t-s)^\rho)\\
&\sim l_{2}\mathbb{E}\left(D_{f}^{\alpha}(s)\right)+ d\mathbb{E}\left(D_{f}^{2\alpha}(s)\right)-l_{1}^{2}  \mathbb{E}\left(D_f^{\alpha}(s)\right)k_1s\rho t^{\rho-1},
\end{align*}	
where we have used (\ref{12}) in the penultimate step. Again by using (\ref{12}) in (\ref{var}), we get
\begin{align*}
\operatorname{Var}\left(\mathcal{Z}^{f}_{\alpha}(t)\right)&\sim l_{2}k_1t^\rho- l_{1}^{2}k^{2}_1t^{2\rho} +2dk_2t^{2\rho}\\
&\sim \left(2dk_2-k^{2}_1l_{1}^{2}\right)t^{2\rho}\\
&=\frac{r_{1}^2}{\alpha}\left(\frac{k_2}{\Gamma(2\alpha)}-\frac{k_1^2}{\alpha\Gamma^2(\alpha)}\right)t^{2\rho}.
\end{align*}
\noindent Thus, for large $t$, we have
\begin{align*}
\operatorname{Corr}\left(\mathcal{Z}^{f}_{\alpha}(s),\mathcal{Z}^{f}_{\alpha}(t)\right)&\sim\frac{l_{2}\mathbb{E}\left(D_{f}^{\alpha}(s)\right)+ d\mathbb{E}\left(D_{f}^{2\alpha}(s)\right)-l_{1}^{2}  \mathbb{E}\left(D_f^{\alpha}(s)\right)k_1s\rho t^{\rho-1}}{\sqrt{\operatorname{Var}\left(\mathcal{Z}^{f}_{\alpha}(s)\right)}\sqrt{\left(2dk_2-k^{2}_1l_{1}^{2}\right)t^{2\rho}}}\\
&\sim c_{1}(s)t^{-\rho},  		
\end{align*}
\noindent where
\begin{equation*}
c_{1}(s)=\frac{l_{2}\mathbb{E}\left(D_{f}^{\alpha}(s)\right)+ d\mathbb{E}\left(D_{f}^{2\alpha}(s)\right)}{\sqrt{\operatorname{Var}\left(\mathcal{Z}^{f}_{\alpha}(s)\right)\left(2dk_2-k^{2}_1l_{1}^{2}\right)}}.
\end{equation*}		
Hence, the TCFSPoK exhibits the LRD property as  $0< \rho <1$.
\end{proof}
\begin{remark}
In a similar way it can be shown that $\{\mathcal{Z}^{f}(t)\}_{t\ge0}$ exhibits the LRD property.
\end{remark}

The following result will be used to prove the law of iterated logarithm (LIL) for TCFSPoK (see Bertoin (1996), Theorem 14, p. 92).
\begin{lemma}
Let $\{D_{f}(t)\}_{t\ge0}$ be a L\'evy subordinator whose associated Bern\v stein function  $f$  is regularly varying at $0+$ with index $0<\gamma<1$, {\it i.e.},  $\lim_{x\rightarrow 0+}f(\lambda x)/f(x)=\lambda^\gamma$, $\lambda>0$. Also, let
\begin{equation}\label{gt}
g(t)=\frac{\log\log t}{\phi(t^{-1}\log\log t)},\ \ t>e,
\end{equation}
where $\phi$ is the inverse of $f$. Then, 
\begin{equation}\label{qweqw1}
\liminf_{t\to\infty}\frac{D_{f}(t)}{g(t)}=\gamma(1-\gamma)^{(1-\gamma)/\gamma},\ \ \text{a.s.}
\end{equation}
\end{lemma}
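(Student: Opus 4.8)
The plan is to deduce the statement directly from the classical law of the iterated logarithm for the lower envelope of a subordinator, as given in Bertoin (1996), Theorem 14, p.\ 92, after verifying that the hypotheses match and translating the notation.

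First I would recall that, by definition of a subordinator, $\mathbb{E}(e^{-sD_f(t)})=e^{-tf(s)}$, so $f$ is precisely the Laplace exponent of $\{D_f(t)\}_{t\ge0}$. Since in (\ref{fs}) the L\'evy measure $\mu$ is infinite and there is no killing term, $f$ is continuous and strictly increasing on $[0,\infty)$ with $f(0+)=0$ and $f(\infty)=\infty$; hence it admits a continuous strictly increasing inverse $\phi=f^{-1}:[0,\infty)\to[0,\infty)$, which is exactly the function appearing in (\ref{gt}). Because $f$ is assumed regularly varying at $0+$ with index $\gamma\in(0,1)$, its inverse $\phi$ is regularly varying at $0+$ with index $1/\gamma$; since $t^{-1}\log\log t\to0$ as $t\to\infty$, the function $g$ in (\ref{gt}) is then well defined for all large $t$.

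Next I would invoke Bertoin's theorem: for a subordinator whose Laplace exponent is regularly varying at $0+$ with index $\gamma\in(0,1)$, the lower functions near infinity are governed by the normalizing function $g$ built from $\log\log t$ and the inverse of the Laplace exponent, and one has $\liminf_{t\to\infty}D_f(t)/g(t)=\gamma(1-\gamma)^{(1-\gamma)/\gamma}$ almost surely, which is exactly (\ref{qweqw1}). The verification then reduces to bookkeeping: checking that our $g$ coincides with the normalizing function in Bertoin's statement and that the stated constant is reproduced with the correct index.

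I expect the only mild obstacle to be this notational alignment --- matching the regular-variation index and the precise form of the normalizing sequence in Bertoin's formulation with the one adopted here --- rather than any genuine analytic difficulty, since the lemma is a direct specialization of a known result.
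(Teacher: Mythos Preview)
Your proposal is correct and matches the paper's treatment exactly: the paper does not prove this lemma at all but simply quotes it as Theorem 14, p.\ 92 of Bertoin (1996), so your plan to invoke that result after checking the notational alignment is precisely what is intended. The additional care you take in verifying that $\phi$ is well defined and that $g$ makes sense for large $t$ is more detail than the paper provides, but entirely appropriate.
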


\begin{theorem}
Let the Bern\v stein function $f(\cdot)$ associated with L\'evy subordinator $\{D_{f}(t)\}_{t\ge0}$ be regularly varying at 0+ with index $0<\gamma<1$. Then, 
\begin{equation}\label{lil}
\liminf_{t\rightarrow\infty}\frac{\mathcal{Z}^{f}_{\alpha}(t)}{(g(t))^{\alpha}}\stackrel{d}{=}\frac{k(k+1)}{2}(\lambda_{1}-\lambda_{2}) Y_{\alpha}(1)\gamma^\alpha\left(1-\gamma\right)^{\alpha(1-\gamma)/\gamma},
\end{equation}
where $	g(t)$ is given in (\ref{gt}).
	
\end{theorem}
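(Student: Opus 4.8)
The plan is to follow the strategy already used above in the proof that the one-dimensional distributions of the FSPoK are not infinitely divisible, now retaining the normalizing function $g$ and feeding in the law of iterated logarithm for $\{D_f(t)\}_{t\ge0}$ supplied by the Lemma. First, from \eqref{skellam}, \eqref{FSPoK} and \eqref{qws11ww1} I would write
\[
\mathcal{Z}^f_\alpha(t)=N_1^k\big(Y_\alpha(D_f(t))\big)-N_2^k\big(Y_\alpha(D_f(t))\big),
\]
where the processes $\{N_1^k(t)\}$, $\{N_2^k(t)\}$, $\{Y_\alpha(t)\}$ and $\{D_f(t)\}$ are independent. Conditioning on the path of $\{D_f(t)\}$ and using the self-similarity $\{Y_\alpha(ct)\}_{t\ge0}\overset{d}{=}\{c^\alpha Y_\alpha(t)\}_{t\ge0}$, I would replace $Y_\alpha(D_f(t))$ by $(D_f(t))^\alpha Y_\alpha(1)$, so that for each fixed $t$
\[
\frac{\mathcal{Z}^f_\alpha(t)}{(g(t))^\alpha}\overset{d}{=}\left(\frac{D_f(t)}{g(t)}\right)^{\!\alpha}Y_\alpha(1)\left(\frac{N_1^k\big((D_f(t))^\alpha Y_\alpha(1)\big)}{(D_f(t))^\alpha Y_\alpha(1)}-\frac{N_2^k\big((D_f(t))^\alpha Y_\alpha(1)\big)}{(D_f(t))^\alpha Y_\alpha(1)}\right).
\]

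Next I would let $t\to\infty$. Since $\mu([0,\infty))=\infty$ the subordinator is non-degenerate, so $D_f(t)\to\infty$ a.s., hence $(D_f(t))^\alpha Y_\alpha(1)\to\infty$ a.s.; by the strong law \eqref{limit} for the PPoK the last factor then converges a.s. to $\frac{k(k+1)}{2}\lambda_1-\frac{k(k+1)}{2}\lambda_2=\frac{k(k+1)}{2}(\lambda_1-\lambda_2)$. On the other hand, because $f$ is regularly varying at $0+$ with index $\gamma\in(0,1)$, the Lemma applies, and raising \eqref{qweqw1} to the power $\alpha$ gives
\[
\liminf_{t\to\infty}\left(\frac{D_f(t)}{g(t)}\right)^{\!\alpha}=\gamma^{\alpha}(1-\gamma)^{\alpha(1-\gamma)/\gamma}\qquad\text{a.s.}
\]
Combining the a.s.\ limit of the Poisson factor, the factor $Y_\alpha(1)$ and the liminf of the $D_f$-factor would then produce exactly the right-hand side of \eqref{lil}. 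In the nondegenerate regime I would assume $\lambda_1\ge\lambda_2$, so that the constant $\frac{k(k+1)}{2}(\lambda_1-\lambda_2)$ is nonnegative and no interchange of $\liminf$ and $\limsup$ is forced (the case $\lambda_1=\lambda_2$ being trivial, the liminf then being $0$).

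The main obstacle will be the passage from the identity of one-dimensional laws to an identity of laws for $\liminf_{t\to\infty}$: the substitution of $(D_f(t))^\alpha Y_\alpha(1)$ for $Y_\alpha(D_f(t))$ is valid marginally in $t$ but not as an identity of processes, so one cannot simply pull $\liminf_t$ through the $\overset{d}{=}$. I would make this step rigorous by conditioning on the path $\psi(\cdot)=D_f(\cdot)$ and working along a sequence $t_n\to\infty$ (measurable with respect to $\{D_f(t)\}$) that attains the liminf in \eqref{qweqw1}: on this sequence $D_f(t_n)\sim\gamma(1-\gamma)^{(1-\gamma)/\gamma}g(t_n)\to\infty$, while for all other large $t$ the ratio $D_f(t)/g(t)$ stays above any number below $\gamma(1-\gamma)^{(1-\gamma)/\gamma}$ and therefore does not lower the liminf. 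Together with the already-established in-distribution limit $\mathcal{S}^k_\alpha(v)/v^\alpha\overset{d}{=}\big(N_1^k(v^\alpha Y_\alpha(1))-N_2^k(v^\alpha Y_\alpha(1))\big)/v^\alpha\to\frac{k(k+1)}{2}(\lambda_1-\lambda_2)Y_\alpha(1)$ as $v\to\infty$, this should identify $\liminf_{t\to\infty}\mathcal{Z}^f_\alpha(t)/(g(t))^\alpha$ in distribution with the claimed random variable. I would isolate this measurability-and-limit-interchange argument as a short lemma; everything else reduces to the two limit theorems quoted above together with the self-similarity of $\{Y_\alpha(t)\}$.
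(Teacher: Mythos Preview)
Your proposal follows essentially the same route as the paper: decompose $\mathcal{Z}^{f}_{\alpha}(t)$ via \eqref{skellam}, \eqref{FSPoK}, \eqref{qws11ww1}, apply self-similarity of $\{Y_\alpha(t)\}$ to rewrite the inner time as $(D_f(t))^\alpha Y_\alpha(1)$, factor the ratio, invoke \eqref{limit} for the Poisson quotient, and finish with \eqref{qweqw1} raised to the power~$\alpha$. The paper's proof does exactly these steps in the same order.

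Where you differ is in care, not content. The paper simply writes the $\overset{d}{=}$ coming from self-similarity and then takes $\liminf_{t\to\infty}$ on both sides without further comment; it does not isolate the issue you raise about passing a pathwise $\liminf$ through a one-dimensional distributional identity, nor does it discuss the sign of $\lambda_1-\lambda_2$. Your proposed fix (conditioning on the path of $D_f$ and working along a sequence attaining the $\liminf$ in \eqref{qweqw1}) is a reasonable way to patch that gap, and the assumption $\lambda_1\ge\lambda_2$ is indeed needed for the $\liminf$ to factor as written. So your argument is at least as complete as the paper's, and arguably more so on this point.
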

\begin{proof}
Note that $D_f(t)\to\infty$ as $t\to\infty$, {\it a.s.} (see Bertoin (1996), p. 73). From (\ref{skellam}), (\ref{FSPoK}) and  (\ref{qws11ww1}), we get
\begin{align*}
\mathcal{Z}^{f}_{\alpha}(t)&=N_{1}^{k}(Y_{\alpha}(D_{f}(t)))-N_{2}^{k}(Y_{\alpha}(D_{f}(t)))\\
&\stackrel{d}{=}N_{1}^{k}(D^{\alpha}_{f}(t)Y_{\alpha}(1))-N_{2}^{k}(D^{\alpha}_{f}(t)Y_{\alpha}(1)),
\end{align*}  
where we have used the self-similarity property of	$\{Y_{\alpha}(t)\}_{t\ge0}$.
Hence,
\begin{align*}
\liminf_{t\rightarrow\infty}\frac{\mathcal{Z}^{f}_{\alpha}(t)}{(g(t))^{\alpha}}&\stackrel{d}{=}\liminf_{t\rightarrow\infty}\frac{N_{1}^{k}(D^{\alpha}_{f}(t)Y_{\alpha}(1))-N_{2}^{k}(D^{\alpha}_{f}(t)Y_{\alpha}(1))}{(g(t))^{\alpha}}\\
&=\liminf_{t\rightarrow\infty}\left(\frac{N_{1}^{k}(D^{\alpha}_{f}(t)Y_{\alpha}(1))-N_{2}^{k}(D^{\alpha}_{f}(t)Y_{\alpha}(1))}{D^{\alpha}_{f}(t)Y_{\alpha}(1)}\right)\frac{D^{\alpha}_{f}(t)Y_{\alpha}(1)}{(g(t))^{\alpha}}\\
&\stackrel{d}{=}\frac{k(k+1)}{2}(\lambda_{1}-\lambda_{2}) Y_{\alpha}(1)\left(\liminf_{t\rightarrow\infty}\frac{D_{f}(t)}{g(t)}\right)^{\alpha},\ \ (\text{using}\ (\ref{limit}))\\
&\stackrel{d}{=}\frac{k(k+1)}{2}(\lambda_{1}-\lambda_{2}) Y_{\alpha}(1)\gamma^\alpha\left(1-\gamma\right)^{\alpha(1-\gamma)/\gamma},
\end{align*}
where the last step follows from \eqref{qweqw1}.
\end{proof}
\begin{remark}
	On substituting $k=1$ and taking $\lambda_{2}\to 0$ in (\ref{lil}), we get the LIL for a time-changed version of the fractional Poisson process (see Maheshwari and Vellaisamy (2019), Theorem 3.5).
\end{remark}
\subsection{Some special cases of the TCFSPoK}
In this subsection, we time-change the FSPoK and the SPoK by three specific L\'evy subordinators, namely, the gamma subordinator, the tempered stable subordinator and the inverse Gaussian subordinator. We obtain the  governing systems of differential equations for their one-dimensional distributions.
\subsubsection{FSPoK time-changed by gamma subordinator}
Let $\{Z(t)\}_{t\ge0}$ be a gamma subordinator with the following probability density function (pdf):
\begin{equation*}
g(x,t)=\frac{a^{bt}}{\Gamma(bt)}x^{bt-1}e^{-ax},\ \ x>0,
\end{equation*}
where $a>0$ and $b>0$. The Bern\v stein function $f_{1}(s)$ associated  with $\{Z(t)\}_{t\ge0}$ is given by $f_{1}(s)=b\log(1+s/a),\ s>0$ (see Applebaum (2009), p. 55).

The FSPoK time-changed by an independent gamma subordinator is defined as 
\begin{equation}\label{gam}
\mathcal{Z}^{f_{1}}_{\alpha}(t)\coloneqq \mathcal{S}^{k}_{\alpha}(Z(t)),\ \ t\ge0.
\end{equation}
The following result will be used to obtain the governing system of differential equations for its pmf $p_{\alpha}^{f_{1}}(n,t)=\mathrm{Pr}\{\mathcal{Z}^{f_{1}}_{\alpha}(t)=n\}$, $n\in\mathbb{Z}$.
\begin{lemma}[Vellaisamy and Maheshwari (2018)]\label{lemma}
For any $\gamma\ge1$, the pdf $g(x,t)$ of gamma subordinator solves
\begin{align*}
D_t^{\gamma}g(x,t)&=bD_t^{\gamma-1}\left(\log(a x)-\psi(bt)\right)g(x,t), \ \ \ x>0,\\
g(x,0)&=0.	
\end{align*}
Here, $\psi(x)\coloneqq\Gamma^{\prime}(x)/\Gamma(x)$ is the digamma function and $D_t^{\gamma}$ is the R-L fractional derivative defined in (\ref{RL}).
\end{lemma}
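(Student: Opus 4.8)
The plan is to reduce the whole statement to the elementary case $\gamma=1$ and then propagate it to all $\gamma\ge1$ by applying the Riemann--Liouville operator $D_t^{\gamma-1}$ to both sides, using the vanishing initial datum to make the operators compose correctly.

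First I would record the initial condition: since $\Gamma(bt)\sim (bt)^{-1}$ as $t\to0+$, we get $a^{bt}/\Gamma(bt)\sim bt\to0$ while $x^{bt-1}\to x^{-1}$ and $e^{-ax}$ is bounded, so $g(x,t)\to0$ as $t\to0+$ for every fixed $x>0$; hence $g(x,0)=0$. Next, for $\gamma=1$ the identity is a direct computation: writing $\log g(x,t)=bt\log a-\log\Gamma(bt)+(bt-1)\log x-ax$ and differentiating in $t$ gives $\partial_t\log g(x,t)=b\log a-b\psi(bt)+b\log x=b(\log(ax)-\psi(bt))$, so that
\begin{equation*}
\partial_t g(x,t)=b\bigl(\log(ax)-\psi(bt)\bigr)g(x,t).
\end{equation*}
Since $D_t^{1}=\partial_t$ and $D_t^{0}$ is the identity, this is exactly the asserted equation when $\gamma=1$.

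For general $\gamma>1$ I would apply $D_t^{\gamma-1}$ to both sides of the displayed identity. The right-hand side immediately becomes $bD_t^{\gamma-1}\bigl((\log(ax)-\psi(bt))g(x,t)\bigr)$, which is the claimed right-hand side. For the left-hand side one needs the composition rule $D_t^{\gamma-1}\bigl(\partial_t\phi\bigr)=D_t^{\gamma}\phi$ valid for functions with $\phi(0)=0$: writing $n<\gamma<n+1$, the kernel $s\mapsto s^{-(\gamma-n)}$ is locally integrable (as $\gamma-n\in(0,1)$), so the convolution identity $\frac{d}{dt}(\phi*k)=\phi(0)\,k+\phi'*k$ together with $g(x,0)=0$ collapses one derivative and identifies $D_t^{\gamma-1}\partial_t g(x,\cdot)$ with $D_t^{\gamma}g(x,\cdot)$ (the case $\gamma\in\mathbb{N}$ being the trivial instance). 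Combining the two sides gives the lemma.

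The main obstacle is precisely this last step: justifying $D_t^{\gamma-1}\partial_t=D_t^{\gamma}$ on $g(x,\cdot)$. This is where the hypothesis $g(x,0)=0$ is indispensable, since it annihilates the boundary term that would otherwise obstruct the identity; one must also check that $g(x,\cdot)$ is regular enough on $(0,\infty)$ — continuous up to $0$ with value $0$, smooth on $(0,\infty)$, and with the mild growth/integrability needed to interchange $d/dt$ with the fractional integral — all of which follow from the explicit formula for $g$. Alternatively, one may simply invoke the standard semigroup property of Riemann--Liouville derivatives under vanishing initial data (see Kilbas {\it et al.} (2006)) and omit these routine verifications.
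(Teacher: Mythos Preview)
The paper does not actually prove this lemma: it is quoted verbatim from Vellaisamy and Maheshwari (2018) and used as a black box in the proof of the subsequent theorem. So there is no ``paper's proof'' to compare against. Your argument, however, is a correct self-contained derivation: the $\gamma=1$ case is the logarithmic derivative computation you give, the initial condition $g(x,0)=0$ follows from $\Gamma(bt)\sim(bt)^{-1}$, and the passage to general $\gamma\ge1$ by applying $D_t^{\gamma-1}$ is justified precisely because that vanishing initial value kills the boundary term in the composition rule $D_t^{\gamma-1}\partial_t f=D_t^{\gamma}f-f(0+)\,t^{-\gamma}/\Gamma(1-\gamma)$. Your final paragraph correctly identifies this as the only nontrivial point and disposes of it; the regularity checks (boundedness of $\psi(bt)g(x,t)$ near $t=0$, since $\psi(bt)\sim-(bt)^{-1}$ while $g(x,t)\sim bt\cdot x^{-1}e^{-ax}$) are routine as you say.
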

\begin{theorem}
Let $\gamma\ge1$ and $\psi(x)$ be the digamma function. Then, the pmf $p_{\alpha}^{f_{1}}(n,t)$ solves the following equation:
\begin{equation*}
D_t^{\gamma}p_{\alpha}^{f_{1}}(n,t)=bD_t^{\gamma-1}\left(\log (a)-\psi(bt)\right)p_{\alpha}^{f_{1}}(n,t)+b\int_{0}^{\infty}p^{k}_{\alpha}(n,x)\log (x)D_t^{\gamma-1}g(x,t)\,\mathrm{d}x.
\end{equation*}
\end{theorem}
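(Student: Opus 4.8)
The plan is to begin from the conditioning identity, analogous to (\ref{ci}), that follows from the independence of the gamma subordinator and the FSPoK in the definition (\ref{gam}): writing $g(x,t)$ for the pdf of $\{Z(t)\}_{t\ge0}$,
\begin{equation*}
p_\alpha^{f_1}(n,t)=\int_0^\infty p^k_\alpha(n,x)\,g(x,t)\,\mathrm{d}x,\qquad n\in\mathbb{Z}.
\end{equation*}
First I would apply the Riemann--Liouville derivative $D_t^\gamma$ of (\ref{RL}) to both sides and carry it inside the integral. Since $0\le p^k_\alpha(n,x)\le1$ and $\mathbb{E}\left(Z^r(t)\right)<\infty$ for every $r>0$, the iterated integrals produced after differentiating in $t$ converge absolutely and uniformly for $t$ in compact subsets of $(0,\infty)$, which permits differentiation under the integral sign and reduces the task to inserting the expression for $D_t^\gamma g(x,t)$.

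Next I would invoke Lemma~\ref{lemma} of Vellaisamy and Maheshwari (2018), which gives, for $\gamma\ge1$,
\begin{equation*}
D_t^\gamma g(x,t)=b\,D_t^{\gamma-1}\!\left(\bigl(\log(ax)-\psi(bt)\bigr)g(x,t)\right),\qquad x>0.
\end{equation*}
Writing $\log(ax)=\log(a)+\log(x)$ and using the linearity of $D_t^{\gamma-1}$, this splits into
\begin{equation*}
D_t^\gamma g(x,t)=b\,D_t^{\gamma-1}\!\left(\bigl(\log(a)-\psi(bt)\bigr)g(x,t)\right)+b\,D_t^{\gamma-1}\!\left(\log(x)\,g(x,t)\right),
\end{equation*}
where in the second summand $\log(x)$ is constant in $t$ and so passes through $D_t^{\gamma-1}$, while in the first summand the factor $\log(a)-\psi(bt)$ depends only on $t$ and stays inside.

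Substituting this back and interchanging $\int_0^\infty(\cdot)\,\mathrm{d}x$ with $D_t^{\gamma-1}$ once more, the first term becomes $b\,D_t^{\gamma-1}\bigl((\log(a)-\psi(bt))\int_0^\infty p^k_\alpha(n,x)g(x,t)\,\mathrm{d}x\bigr)=b\,D_t^{\gamma-1}\bigl((\log(a)-\psi(bt))\,p_\alpha^{f_1}(n,t)\bigr)$, and the second term becomes $b\int_0^\infty p^k_\alpha(n,x)\log(x)\,D_t^{\gamma-1}g(x,t)\,\mathrm{d}x$; adding the two yields exactly the asserted equation.

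The step I expect to be the main obstacle is the analytic bookkeeping behind these interchanges. As $D_t^\gamma$ is nonlocal --- an $m$-fold ordinary $t$-derivative of a Riemann--Liouville fractional integral, with $m$ the positive integer from (\ref{RL}) --- passing it and $D_t^{\gamma-1}$ through the $x$-integral requires integrable bounds, uniform for $t$ in compact subsets of $(0,\infty)$, on $\partial_t^{j}g(x,t)$ and on $\log(x)\,\partial_t^{j}g(x,t)$ for $j\le m$, near both $x=0+$ and $x=\infty$. Near the origin one uses $g(x,t)\le C(t)\,x^{bt-1}$ with $bt>0$ for $t>0$, whence $\int_0^1 x^{bt-1}|\log x|\,\mathrm{d}x<\infty$ and the logarithmic singularity is harmless; near infinity the exponential factor $e^{-ax}$ together with the finiteness of all moments of $Z(t)$ supplies the domination. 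Once these estimates are in place, Fubini's theorem and differentiation under the integral sign justify every interchange, and the computation above goes through verbatim.
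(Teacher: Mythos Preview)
Your proposal is correct and follows essentially the same route as the paper: start from the conditioning identity $p_\alpha^{f_1}(n,t)=\int_0^\infty p_\alpha^k(n,x)g(x,t)\,\mathrm{d}x$, apply $D_t^\gamma$ under the integral, invoke Lemma~\ref{lemma}, split $\log(ax)=\log(a)+\log(x)$, and regroup. The paper performs exactly these steps without commenting on the interchange of $D_t^\gamma$ with the $x$-integral, so your additional analytic bookkeeping is more careful than what the paper itself provides.
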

\begin{proof}
From (\ref{gam}), we have
\begin{equation}\label{qaza122}
p_{\alpha}^{f_{1}}(n,t)=\int_{0}^{\infty}p^{k}_{\alpha}(n,x)g(x,t)\,\mathrm{d}x.
\end{equation}
Taking the R-L fractional derivative in (\ref{qaza122}) and using Lemma \ref{lemma}, we get
\begin{align*}
D_t^{\gamma}p_{\alpha}^{f_{1}}(n,t)&=\int_{0}^{\infty}p^{k}_{\alpha}(n,x)D_t^{\gamma}g(x,t)\,\mathrm{d}x\\
&=b\int_{0}^{\infty}p^{k}_{\alpha}(n,x)D_t^{\gamma-1}\left(\log(a x)-\psi(bt)\right)g(x,t)\,\mathrm{d}x\\
&=bD_t^{\gamma-1}\log (a) \int_{0}^{\infty}p^{k}_{\alpha}(n,x)g(x,t)\,\mathrm{d}x+b \int_{0}^{\infty} p^{k}_{\alpha}(n,x)\log (x)D_t^{\gamma-1}g(x,t)\,\mathrm{d}x\\
&\ \ - bD_t^{\gamma-1}\psi(bt)\int_{0}^{\infty}p^{k}_{\alpha}(n,x)g(x,t)\,\mathrm{d}x\\
&=bD_t^{\gamma-1}\left(\log (a)-\psi(bt)\right)p_{\alpha}^{f_{1}}(n,t)+b\int_{0}^{\infty}p^{k}_{\alpha}(n,x)\log (x)D_t^{\gamma-1}g(x,t)\,\mathrm{d}x.
\end{align*}
This completes the proof.
\end{proof}
Next we discuss two particular cases of the time-changed SPoK (see Gupta {\it et al.} (2020)).
\subsubsection{SPoK time-changed by tempered stable subordinator}
Let $\{\mathscr{D}_{\eta,\nu}(t)\}_{t\ge0}$  denote the tempered stable subordinator (TSS) with stability index $0<\nu<1$ and the  tempering parameter $\eta>0$. The  Bern\v stein function $f_{2}(s)$ associated with TSS is given by 
\begin{equation}\label{bs1}
f_{2}(s)=(\eta+s)^{\nu}-\eta^{\nu},\ s>0.
\end{equation}
The SPoK time-changed by an independent TSS is defined as 
\begin{equation}\label{tss}
\mathcal{Z}^{f_{2}}(t)\coloneqq \mathcal{S}^{k}(\mathscr{D}_{\eta,\nu}(t)),\ \ t\ge0.
\end{equation}	
\begin{proposition}
The pmf $p^{f_{2}}(n,t)=\mathrm{Pr}\{\mathcal{Z}^{f_{2}}(t)=n\}$, $n\in\mathbb{Z}$, is the solution of the following differential equation:
\begin{equation*}
\left(\eta^{\nu}-\frac{\mathrm{d}}{\mathrm{d} t}\right)^{1/\nu}p^{f_{2}}(n,t)=\eta p^{f_{2}}(n,t)+k(\lambda_{1}+\lambda_{2})p^{f_{2}}(n,t)-\lambda_{1}\sum_{j=1}^{k}p^{f_{2}}(n-j,t)-\lambda_{2}\sum_{j=1}^{k}p^{f_{2}}(n+j,t).
\end{equation*}
\end{proposition}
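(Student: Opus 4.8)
The plan is to write $p^{f_2}(n,t)$ as a subordinated probability, namely
\begin{equation*}
p^{f_2}(n,t)=\int_0^\infty p^k(n,x)\,q_{\eta,\nu}(x,t)\,\mathrm{d}x,
\end{equation*}
where $q_{\eta,\nu}(x,t)$ is the pdf of the TSS $\{\mathscr{D}_{\eta,\nu}(t)\}_{t\ge0}$, and then to transfer the governing equation for the SPoK pmf $p^k(n,t)$ in \eqref{skellam gov} through this integral. The key device is the governing (pseudo-differential) equation satisfied by $q_{\eta,\nu}(x,t)$ itself: since the TSS has Bern\v stein function $f_2(s)=(\eta+s)^\nu-\eta^\nu$ given in \eqref{bs1}, its transition density solves
\begin{equation*}
\frac{\partial}{\partial t}q_{\eta,\nu}(x,t)=-f_2\!\left(\frac{\partial}{\partial x}\right)q_{\eta,\nu}(x,t)=-\Big((\eta-\tfrac{\partial}{\partial x})^{\nu}-\eta^{\nu}\Big)q_{\eta,\nu}(x,t),
\end{equation*}
or equivalently $\big(\eta^\nu-\tfrac{\mathrm d}{\mathrm d t}\big)^{1/\nu}$ acts nicely; more directly one uses the identity $(\eta-\tfrac{\partial}{\partial x})^\nu q_{\eta,\nu}(x,t)=(\eta^\nu-\tfrac{\partial}{\partial t})q_{\eta,\nu}(x,t)$, which is the standard fractional-power relation for the TSS density. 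Applying the operator $\big(\eta^\nu-\tfrac{\mathrm d}{\mathrm d t}\big)^{1/\nu}$ to the subordination integral and moving it inside (justified because $\mathbb{E}(D^r_f(t))<\infty$ and the density decays rapidly) converts time-derivatives in $t$ into the operator $\eta-\tfrac{\partial}{\partial x}$ acting on $q_{\eta,\nu}(x,t)$ in the $x$-variable.

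Concretely, first I would record that for the TSS one has the operational identity
\begin{equation*}
\left(\eta^{\nu}-\frac{\mathrm{d}}{\mathrm{d}t}\right)^{1/\nu}q_{\eta,\nu}(x,t)=\left(\eta-\frac{\partial}{\partial x}\right)q_{\eta,\nu}(x,t),
\end{equation*}
which follows because the left side has $x$-Laplace symbol $\big(\eta^\nu-(-f_2(s))\big)^{1/\nu}=\big(\eta^\nu-\eta^\nu+(\eta+s)^\nu\big)^{1/\nu}=\eta+s$, matching the symbol of $\eta-\partial/\partial x$. Next I would substitute this into
\begin{equation*}
\left(\eta^{\nu}-\frac{\mathrm{d}}{\mathrm{d}t}\right)^{1/\nu}p^{f_2}(n,t)=\int_0^\infty p^k(n,x)\left(\eta-\frac{\partial}{\partial x}\right)q_{\eta,\nu}(x,t)\,\mathrm{d}x,
\end{equation*}
split off the $\eta$ term (which reproduces $\eta\, p^{f_2}(n,t)$), and integrate the $\partial/\partial x$ term by parts. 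The boundary term at $x=0$ vanishes since $q_{\eta,\nu}(0+,t)=0$ for $t>0$, and at $x=\infty$ by rapid decay; this leaves $\int_0^\infty \big(\tfrac{\mathrm d}{\mathrm d x}p^k(n,x)\big) q_{\eta,\nu}(x,t)\,\mathrm{d}x$. Finally, substituting the forward equation \eqref{skellam gov} for $\tfrac{\mathrm d}{\mathrm d x}p^k(n,x)$ and integrating termwise against $q_{\eta,\nu}(x,t)$ recognizes each integral as $p^{f_2}(\cdot,t)$, yielding
\begin{equation*}
\left(\eta^{\nu}-\frac{\mathrm{d}}{\mathrm{d} t}\right)^{1/\nu}p^{f_{2}}(n,t)=\eta p^{f_{2}}(n,t)+k(\lambda_{1}+\lambda_{2})p^{f_{2}}(n,t)-\lambda_{1}\sum_{j=1}^{k}p^{f_{2}}(n-j,t)-\lambda_{2}\sum_{j=1}^{k}p^{f_{2}}(n+j,t),
\end{equation*}
which is the claim.

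The main obstacle I anticipate is making the operator manipulation rigorous: interchanging the nonlocal operator $\big(\eta^\nu-\tfrac{\mathrm d}{\mathrm d t}\big)^{1/\nu}$ with the subordination integral, and justifying the operational identity for $q_{\eta,\nu}$ in a pointwise (rather than purely symbolic Laplace-transform) sense. In practice I would either (i) work throughout on the Laplace-transform side in $x$ — so that all operators become multiplication by symbols and the identities are algebraic — and then invert, or (ii) cite the known governing equation for the TSS density (this is precisely the line taken in the referenced work of Gupta {\it et al.} (2020) and in standard TSS literature). The second, cleaner route is to reduce everything to $k=1$-type computations already available: since $p^{f_2}$ is the SPoK composed with the TSS, one can alternatively write the SPoK governing equation \eqref{skellam gov} in operator form $\tfrac{\mathrm d}{\mathrm d t}p^k(n,t)=\mathcal{A}p^k(n,t)$ with $\mathcal{A}$ the (bounded, shift-type) generator appearing on the right of \eqref{skellam gov}, observe that subordination by $\mathscr{D}_{\eta,\nu}$ replaces $\tfrac{\mathrm d}{\mathrm d t}$ by $f_2$ evaluated at the generator, i.e. $f_2(-\mathcal{A})p^{f_2}=0$ in the appropriate sense, and then rearrange $f_2(-\mathcal{A})=(\eta+(-\mathcal{A}))^\nu-\eta^\nu$ to the stated form by applying $\big(\eta^\nu+f_2(-\mathcal A)\big)^{1/\nu}=\eta-\mathcal A$. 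All remaining steps are the routine termwise integrations indicated above.
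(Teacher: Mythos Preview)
Your approach is essentially the paper's own: represent $p^{f_2}$ as a subordination integral, invoke the governing pseudo-differential equation for the TSS density (the paper cites Beghin (2015), Eq.~(15)), integrate by parts using the vanishing of $h_{\eta,\nu}(x,t)$ at $x=0$ and $x=\infty$, and then substitute the SPoK forward equation \eqref{skellam gov}. So strategically there is nothing to add.

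There is, however, a sign slip in your operational identity for the TSS density that would make the computation come out wrong. Under the one-sided Laplace transform in $x$ (with $q_{\eta,\nu}(0+,t)=0$), the symbol of $\partial/\partial x$ is $+s$, not $-s$; thus the symbol $\eta+s$ that you correctly obtain for $\big(\eta^{\nu}-\tfrac{\mathrm d}{\mathrm d t}\big)^{1/\nu}$ matches $\eta+\partial/\partial x$, not $\eta-\partial/\partial x$. Equivalently, Beghin's equation reads
\[
\frac{\partial}{\partial x}h_{\eta,\nu}(x,t)=-\eta\, h_{\eta,\nu}(x,t)+\Big(\eta^{\nu}-\frac{\partial}{\partial t}\Big)^{1/\nu}h_{\eta,\nu}(x,t),
\]
i.e.\ $\big(\eta^{\nu}-\partial_t\big)^{1/\nu}h_{\eta,\nu}=(\eta+\partial_x)h_{\eta,\nu}$. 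If you carry your version $(\eta-\partial_x)$ through, the integration by parts leaves $+\int_0^\infty(\partial_x p^k)\,q_{\eta,\nu}\,\mathrm dx$, and substituting \eqref{skellam gov} then produces
\[
\eta\, p^{f_2}(n,t)-k(\lambda_1+\lambda_2)p^{f_2}(n,t)+\lambda_1\sum_{j=1}^{k}p^{f_2}(n-j,t)+\lambda_2\sum_{j=1}^{k}p^{f_2}(n+j,t),
\]
with the signs on all $\lambda$-terms reversed relative to the statement. With the corrected sign (so that the integration by parts leaves $-\int_0^\infty(\partial_x p^k)\,q_{\eta,\nu}\,\mathrm dx$), the rest of your argument goes through verbatim and coincides with the paper's proof.
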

\begin{proof}
From (\ref{tss}), we have
\begin{equation*}
p^{f_{2}}(n,t)=\int_{0}^{\infty}p^{k}(n,x)	h_{\eta,\nu}(x,t)\,\mathrm{d}x.
\end{equation*}
Here, $h_{\eta,\nu}(x,t)$ denotes the pdf of TSS. It is known that  (See Beghin (2015), Eq. (15))
\begin{equation*}
\frac{\partial}{\partial x}h_{\eta,\nu}(x,t)=-\eta h_{\eta,\nu}(x,t)+\left(\eta^{\nu}-\frac{\partial}{\partial t}\right)^{1/\nu}h_{\eta,\nu}(x,t),
\end{equation*}
with initial conditions $h_{\eta,\nu}(x,0)=\delta_0(x)$ and $h_{\eta,\nu}(0,t)=0$. Here, $\delta_0(x)$ is the Dirac delta function. Using the following results: $\lim_{x\to 0}h_{\eta,\nu}(x,t)=\lim_{x\to \infty}h_{\eta,\nu}(x,t)=0$, we get 
\begin{align*}
\left(\eta^{\nu}-\frac{\mathrm{d}}{\mathrm{d} t}\right)^{1/\nu}p^{f_{2}}(n,t)&=\int_{0}^{\infty}p^{k}(n,x)\left(\frac{\partial}{\partial x}	h_{\eta,\nu}(x,t)+\eta h_{\eta,\nu}(x,t)\right)\,\mathrm{d}x\\
&=\eta p^{f_{2}}(n,t)-\int_{0}^{\infty}h_{\eta,\nu}(x,t)\frac{\mathrm{d}}{\mathrm{d} x}p^{k}(n,x)	\,\mathrm{d}x\\
&=\eta p^{f_{2}}(n,t)- \int_{0}^{\infty}\bigg(-k(\lambda_{1}+\lambda_{2})p^{k}(n,x)+\lambda_{1}\sum_{j=1}^{k}p^{k}(n-j,x)\\
&\ \ +\lambda_{2}\sum_{j=1}^{k}p^{k}(n+j,x)\bigg)h_{\eta,\nu}(x,t)\,\mathrm{d}x,\ \ {(\text{using (\ref{skellam gov})})}\\
&=\eta p^{f_{2}}(n,t)+k(\lambda_{1}+\lambda_{2})p^{f_{2}}(n,t)\\
&\ \ -\lambda_{1}\sum_{j=1}^{k}p^{f_{2}}(n-j,t)-\lambda_{2}\sum_{j=1}^{k}p^{f_{2}}(n+j,t).
\end{align*}
This completes the proof.
\end{proof}
If $\nu^{-1}=m\ge2$  is an integer then the pmf $p^{f_{2}}(n,t)$ solves 
\begin{equation*}
\sum_{k=1}^{m}(-1)^{k}\binom{m}{k}\eta^{(1-k/m)}\frac{\mathrm{d}^{k}}{\mathrm{d} t^{k}}p^{f_{2}}(n,t)=k(\lambda_{1}+\lambda_{2})p^{f_{2}}(n,t)-\lambda_{1}\sum_{j=1}^{k}p^{f_{2}}(n-j,t)-\lambda_{2}\sum_{j=1}^{k}p^{f_{2}}(n+j,t).
\end{equation*}

\subsubsection{SPoK time-changed by inverse Gaussian subordinator}
Let $\{Y(t)\}_{t\ge0}$ be an inverse Gaussian (IG) subordinator with the following pdf (see Applebaum (2009), Eq. (1.27))
\begin{equation*}
q(x,t)=(2\pi)^{-1/2}\delta tx^{-3/2}e^{\delta\gamma t-\frac{1}{2}(\delta^{2}t^{2}x^{-1}+\gamma^{2}x)}, \ \ x>0,\ \delta>0,\ \gamma>0
\end{equation*}
and the associated Bern\v stein function 
\begin{equation}\label{ploiuy67}
f_{3}(s)=\delta\left(\sqrt{2s+\gamma^2}-\gamma\right),\ \ s>0.
\end{equation}
The SPoK time-changed by an independent IG subordinator is defined as 
\begin{equation}\label{ig}
\mathcal{Z}^{f_{3}}(t)\coloneqq \mathcal{S}^{k}(Y(t)),\ \ t\ge0.
\end{equation}
\begin{proposition}\label{p4.1}
The pmf  $p^{f_{3}}(n,t)=\mathrm{Pr}\{\mathcal{Z}^{f_{3}}(t)=n\}$, $n\in\mathbb{Z}$, solves the following differential equation:
\begin{align*}
\left(\frac{\mathrm{d}^2}{\mathrm{d}t^2}-2\delta \gamma \frac{\mathrm{d}}{\mathrm{d}t}\right) p^{f_{3}}(n,t)&=2\delta^2\Bigg(k(\lambda_{1}+\lambda_{2})p^{f_{3}}(n,t)\\
&\hspace*{3cm} -\lambda_{1}\sum_{j=1}^{k}p^{f_{3}}(n-j,t)-\lambda_{2}\sum_{j=1}^{k}p^{f_{3}}(n+j,t)\Bigg).
\end{align*}
\end{proposition}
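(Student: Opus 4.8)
The plan is to mimic the proof of the tempered-stable proposition above: condition on the inverse Gaussian subordinator, transfer a second-order differential operator in $t$ through the integral onto the SPoK density in its own time variable, and then invoke the SPoK forward equations (\ref{skellam gov}). Concretely, from (\ref{ig}) one starts with
\[
p^{f_{3}}(n,t)=\int_{0}^{\infty}p^{k}(n,x)\,q(x,t)\,\mathrm{d}x ,
\]
so everything reduces to finding a governing PDE for $q(x,t)$ that exchanges $\partial_x$ for a combination of $\partial_t$ and $\partial_t^2$, and then integrating by parts in $x$.

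The key preliminary step is the $q$-equation. Using $\mathbb{E}\big(e^{-sY(t)}\big)=e^{-t f_{3}(s)}$ with $f_{3}$ as in (\ref{ploiuy67}) and writing $L_t:=\partial_t^{2}-2\delta\gamma\,\partial_t$, a direct computation gives
\[
L_t\, e^{-t f_{3}(s)}=\big(f_{3}(s)^{2}+2\delta\gamma f_{3}(s)\big)e^{-t f_{3}(s)}=2\delta^{2}s\, e^{-t f_{3}(s)},
\]
the last equality being the algebraic identity $f_{3}(s)\big(f_{3}(s)+2\delta\gamma\big)=\delta^{2}\big(\sqrt{2s+\gamma^{2}}-\gamma\big)\big(\sqrt{2s+\gamma^{2}}+\gamma\big)=2\delta^{2}s$. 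Inverting the Laplace transform in $x$, and using $q(0+,t)=0$ (which holds since $x^{-3/2}e^{-\delta^{2}t^{2}/(2x)}\to 0$ as $x\to 0+$), one obtains
\[
\left(\frac{\partial^{2}}{\partial t^{2}}-2\delta\gamma\frac{\partial}{\partial t}\right)q(x,t)=2\delta^{2}\frac{\partial}{\partial x}q(x,t).
\]
Alternatively one may note that $Y(t)\overset{d}{=}\mathscr{D}_{\gamma^{2}/2,\,1/2}(\sqrt{2}\,\delta t)$, a time-rescaled tempered stable subordinator with $\nu=1/2$, and specialize the governing equation for the TSS used above; the rescaling $t\mapsto\sqrt{2}\,\delta t$ is precisely what produces the constants $2\delta\gamma$ and $2\delta^{2}$.

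Granted the $q$-equation, apply $L_t$ to $p^{f_{3}}(n,t)$, pass it under the integral (justified by the rapid decay of $q$ and its $t$-derivatives), replace $L_t q$ by $2\delta^{2}\partial_x q$, and integrate by parts in $x$:
\[
L_t\, p^{f_{3}}(n,t)=2\delta^{2}\int_{0}^{\infty}p^{k}(n,x)\,\partial_x q(x,t)\,\mathrm{d}x=-2\delta^{2}\int_{0}^{\infty}q(x,t)\,\partial_x p^{k}(n,x)\,\mathrm{d}x ,
\]
where the boundary terms vanish because $q(0+,t)=0$, $q(x,t)\to 0$ as $x\to\infty$, and $p^{k}(n,\cdot)$ is bounded. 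Substituting the SPoK forward equation (\ref{skellam gov}) for $\partial_x p^{k}(n,x)$ and recognizing each resulting integral as $p^{f_{3}}(n,t)$, $p^{f_{3}}(n-j,t)$, or $p^{f_{3}}(n+j,t)$, the two minus signs combine and yield the asserted identity. The main obstacle is establishing the $q$-governing PDE with the correct constants — either carrying out the Laplace-transform identity $f_{3}(s)^{2}+2\delta\gamma f_{3}(s)=2\delta^{2}s$ together with the boundary check on $q$, or correctly tracking the time-rescaling in the reduction to the $\nu=1/2$ tempered stable case; once that is in hand, the rest is the same conditioning-plus-integration-by-parts scheme already used for the TSS proposition, with only routine justification of differentiating under the integral sign.
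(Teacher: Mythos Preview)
Your proposal is correct and follows essentially the same route as the paper: write $p^{f_{3}}(n,t)=\int_0^\infty p^{k}(n,x)\,q(x,t)\,\mathrm{d}x$, use the PDE $\big(\partial_t^{2}-2\delta\gamma\,\partial_t\big)q=2\delta^{2}\partial_x q$ together with $q(0+,t)=q(\infty,t)=0$, integrate by parts in $x$, and substitute the SPoK forward equations~(\ref{skellam gov}). The only difference is cosmetic: the paper simply cites this $q$-PDE from Vellaisamy and Kumar (2018), whereas you derive it yourself via the Laplace-transform identity $f_{3}(s)^{2}+2\delta\gamma f_{3}(s)=2\delta^{2}s$ (or equivalently via the $\nu=1/2$ tempered-stable identification).
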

\begin{proof}
From (\ref{ig}), we have
\begin{equation}\label{123}
p^{f_{3}}(n,t)=\int_0^{\infty}p^{k}(n,x)q(x,t)\,\mathrm{d}x.
\end{equation}	
On taking derivatives, we get
\begin{equation*}
\frac{\mathrm{d}}{\mathrm{d}t} p^{f_{3}}(n,t) = \int_0^{\infty}p^{k}(n,x) \frac{\partial}{\partial t} q(x,t)\,\mathrm{d}x
\end{equation*}
and
\begin{equation*}
\frac{\mathrm{d}^2}{\mathrm{d} t^2} p^{f_{3}}(n,t) = \int_0^{\infty} p^{k}(n,x) \frac{\partial^2}{\partial t^2} q(x,t)\,\mathrm{d}x. 
\end{equation*}
The following results hold for $q(x,t)$  (see Vellaisamy and Kumar (2018), Eq. (3.3))
\begin{equation}\label{qxt212}
\frac{\partial^2}{\partial t^2}q(x,t)-2\delta \gamma \frac{\partial}{\partial t}q(x,t) = 2\delta^2\frac{\partial}{\partial x}q(x,t)
\end{equation}
and $\lim_{x\to\infty}q(x,t)=\lim_{x\to 0}q(x,t)=0$.
From (\ref{123}), we get
\begin{align*}
\left(\frac{\mathrm{d}^2}{\mathrm{d} t^2}-2\delta \gamma \frac{\mathrm{d}}{\mathrm{d} t}\right) p^{f_{3}}(n,t)
&=\int_0^{\infty}p^{k}(n,x) \left(\frac{\partial^2}{\partial t^2}-2\delta \gamma \frac{\partial}{\partial t}\right) q(x,t)\,\mathrm{d}x\\
&= 2\delta^2 \int_0^{\infty} p^{k}(n,x)\frac{\partial}{\partial x} q(x,t)\,\mathrm{d}x, \ \ (\text{using (\ref{qxt212})})\\ 
&= -2\delta^2 \int_0^{\infty} q(x,t)\frac{\mathrm{d}}{\mathrm{d}x} p^{k}(n,x)\, \mathrm{d}x\label{jahxa11}\\
&=-2\delta^2\int_0^{\infty}\bigg(-k(\lambda_{1}+\lambda_{2})p^{k}(n,x)+\lambda_{1}\sum_{j=1}^{k}p^{k}(n-j,x)\\
&\ \ +\lambda_{2}\sum_{j=1}^{k}p^{k}(n+j,x)\bigg)q(x,t)\,\mathrm{d}x, \ \ (\text{using (\ref{skellam gov})})\\  
&=2\delta^2\Bigg(k(\lambda_{1}+\lambda_{2})p^{f_{3}}(n,t)-\lambda_{1}\sum_{j=1}^{k}p^{f_{3}}(n-j,t)\\
&\hspace*{4cm}-\lambda_{2}\sum_{j=1}^{k}p^{f_{3}}(n+j,t)\Bigg).
\end{align*}
This completes the proof. 
\end{proof}

\section{The FSPoK time-changed by inverse subordinator}
The first hitting time  of subordinator $\{D_f (t)\}_{t\ge0}$ is called the inverse subordinator. It is defined as
\begin{equation*}
H_f (t)\coloneqq\inf\{r\ge0: D_f (r)> t\}, \ \ t\ge0.
\end{equation*}
It is known that $\mathbb{E}\left(H^r_f(t)\right)<\infty$ for all $r>0$ (see Aletti {\it et al.} (2018), Section 2.1). 

Let 
\begin{equation}\label{qws11ww91}
\bar{\mathcal{Z}}^{f}_{\alpha}(t)\coloneqq \mathcal{S}^{k}_{\alpha}(H_f (t)),\ \ t\ge0,
\end{equation}
be the FSPoK time-changed by an independent inverse subordinator. 

For $\alpha=1$, the process defined in (\ref{qws11ww91}) reduces to  a time-changed version of the SPoK, that is, 
\begin{equation*}
\bar{\mathcal{Z}}^{f}(t)\coloneqq \mathcal{S}^{k}(H_f (t)),\ \ t\ge0.
\end{equation*}
The pmf $\bar{p}^{f}(n,t)=\mathrm{Pr}\{\bar{\mathcal{Z}}^{f}(t)=n\}$ of $\{\bar{\mathcal{Z}}^{f}(t)\}_{t\ge0}$ is given by 
\begin{equation*}
\bar{p}^{f}(n,t)=\sum_{x=\max(0,-n)}^{\infty}\frac{(k\lambda_{1})^{n+x}(k\lambda_{2})^{x}}{(n+x)!x!}\mathbb{E}\left(e^{-k(\lambda_{1}+\lambda_{2})H_{f}(t)}H_{f}^{2n+x}(t)\right),\ \ n\in \mathbb{Z}.
\end{equation*}
The proof follows along the similar lines to that of Theorem 2 of Gupta {\it et al.} (2020). 

The mean, variance and covariance of $\{\bar{\mathcal{Z}}^{f}_{\alpha}(t)\}_{t\ge0}$ are given as follows:
	
Let $l_{1}=r_{1}/\Gamma(\alpha+1)$, $l_{2}=r_{2}/\Gamma(\alpha+1)$ and  $d=l_{1}^{2}\alpha B(\alpha,\alpha+1)$ where $B(\alpha,\alpha+1)$ is the beta function, and  $r_{1}$ and $r_{2}$ are given by (\ref{r1r2}). Then,\vspace*{.2cm}
		
\noindent $(i)\    \ \mathbb{E}\left(\bar{\mathcal{Z}}^{f}_{\alpha}(t)\right)=l_{1}\mathbb{E}\left(H_{f}^{\alpha}(t)\right)$,\vspace*{.1cm}\\
\noindent $(ii)\ \operatorname{Var}\left(\bar{\mathcal{Z}}^{f}_{\alpha}(t)\right)=\mathbb{E}\left(H_{f}^{\alpha}(t)\right)\left(l_{2}-l_{1}^{2}\mathbb{E}\left(H_{f}^{\alpha}(t)\right)\right)+2d\mathbb{E}\left(H_{f}^{2\alpha}(t)\right)$,\vspace*{.1cm}\\
\noindent $(iii)\ \operatorname{Cov}\left(\bar{\mathcal{Z}}^{f}_{\alpha}(s),\bar{\mathcal{Z}}^{f}_{\alpha}(t)\right)=l_{2}\mathbb{E}\left(H_{f}^{\alpha}(s)\right)+ d\mathbb{E}\left(H_{f}^{2\alpha}(s)\right)-l_{1}^{2}\mathbb{E}\left(H_{f}^{\alpha}(s)\right)\mathbb{E}\left(H_{f}^{\alpha}(t)\right)$\vspace*{.1cm}\\
$\hspace*{4.6cm}
+ \alpha l_{1}^{2}  \mathbb{E}\left(H_{f}^{2\alpha}(t)B\left(\alpha,\alpha+1;H_{f}(s)/H_{f}(t)\right)\right)$,
	
\noindent where $0<s\le t$ and $B(\alpha,\alpha+1;H_{f}(s)/H_{f}(t))$ is the incomplete beta function.

The proof of  $(i)$-$(iii)$ follows similar lines to the corresponding results of $\{\mathcal{Z}^{f}_{\alpha}(t)\}_{t\ge0}$ given in the previous section.

Next we discuss two particular cases of the time-changed process $\{\bar{\mathcal{Z}}^{f}(t)\}_{t\ge0}$.
\subsection{SPoK time-changed by the inverse TSS}
Let $\{\mathscr{L}_{\eta,\nu}(t)\}_{t\ge0}$ denote the inverse TSS which is defined as the first hitting time of TSS $\{\mathscr{D}_{\eta,\nu}(t)\}_{t\ge0}$, $0<\nu<1$, $\eta>0$. That is,
\begin{equation*}
\mathscr{L}_{\eta,\nu}(t)= \inf \{ s\ge 0 : \mathscr{D}_{\eta,\nu}(s)>t\},\ \ t\ge0.
\end{equation*}
The SPoK time-changed by an independent inverse TSS is defined as  
\begin{equation}\label{inv1}
\bar{\mathcal{Z}}^{f_{2}}(t)\coloneqq \mathcal{S}^{k}(\mathscr{L}_{\eta,\nu} (t)),\ \ t\ge0,
\end{equation}
where the associated Bern\v stein function $f_{2}$  is given in (\ref{bs1}).
\begin{proposition}
For $n\in \mathbb{Z}$, the pmf $\bar{p}^{f_{2}}(n,t)=\mathrm{Pr}\{\bar{\mathcal{Z}}^{f_2}(t)=n\}$ solves the following differential equation:
\begin{align*}
\left(\eta+\frac{\mathrm{d}}{\mathrm{d} t}\right)^{\nu}\bar{p}^{f_{2}}(n,t)&=p^{k}(n,x)l_{\eta,\nu}(x,t)\big|_{x=0}-k(\lambda_{1}+\lambda_{2})\bar{p}^{f_{2}}(n,t)+\lambda_{1}\sum_{j=1}^{k}\bar{p}^{f_{2}}(n-j,t)\\
&\ \ +\lambda_{2}\sum_{j=1}^{k}\bar{p}^{f_{2}}(n+j,t)+\eta^{\nu}\bar{p}^{f_{2}}(n,t)-t^{-\nu}E^{1-\nu}_{1,1-\nu}(-\eta t)p^{k}(n,0),
\end{align*}
where $E^{1-\nu}_{1,1-\nu}(\cdot)$ is the three-parameter Mittag-Leffler function defined in (\ref{mit}).
\end{proposition}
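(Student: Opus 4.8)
The plan is to follow the template of the two preceding propositions. First I would record the subordination identity
\[
\bar{p}^{f_{2}}(n,t)=\int_{0}^{\infty}p^{k}(n,x)\,l_{\eta,\nu}(x,t)\,\mathrm{d}x ,
\]
where $l_{\eta,\nu}(x,t)$ is the density of the inverse TSS $\{\mathscr{L}_{\eta,\nu}(t)\}_{t\ge0}$ and $p^{k}(n,x)$ is the SPoK pmf \eqref{pknt}. Since $\mathbb{E}\left(\mathscr{L}_{\eta,\nu}^{r}(t)\right)<\infty$ for all $r>0$, and both $p^{k}(n,\cdot)$ and $\tfrac{\mathrm d}{\mathrm d x}p^{k}(n,\cdot)$ are bounded, all the interchanges of the (nonlocal) $t$-operator with the $x$-integral and the integrations by parts in $x$ below are legitimate, and the contributions at $x=\infty$ vanish.

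The only substantial input is the governing equation of $l_{\eta,\nu}(x,t)$ in the space variable — the tempered analogue of the relation $D_{t}^{\alpha}h_{\alpha}(u,t)=-\partial_{u}h_{\alpha}(u,t)$ used in \eqref{Rl}. In Riemann--Liouville tempered form this reads
\[
\left(\eta+\frac{\partial}{\partial t}\right)^{\nu}l_{\eta,\nu}(x,t)=\eta^{\nu}l_{\eta,\nu}(x,t)-\frac{\partial}{\partial x}l_{\eta,\nu}(x,t),\qquad x>0,
\]
with $l_{\eta,\nu}(x,0)=\delta_{0}(x)$ and $\lim_{x\to\infty}l_{\eta,\nu}(x,t)=0$, where $(\eta+\partial_{t})^{\nu}$ is the operator attached to the Bern\v stein function \eqref{bs1}. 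Applying the Caputo-type operator $(\eta+\tfrac{\mathrm d}{\mathrm d t})^{\nu}$ of the statement to the integral representation and moving it under the integral sign, one obtains $\eta^{\nu}\bar{p}^{f_{2}}(n,t)-\int_{0}^{\infty}p^{k}(n,x)\,\partial_{x}l_{\eta,\nu}(x,t)\,\mathrm{d}x$ plus the term that separates the Caputo-tempered derivative from its Riemann--Liouville version, namely $-\,\bar{p}^{f_{2}}(n,0)\,t^{-\nu}E^{1-\nu}_{1,1-\nu}(-\eta t)$ — the tempered counterpart of the familiar correction $-f(0)t^{-\nu}/\Gamma(1-\nu)$ (note $E^{1-\nu}_{1,1-\nu}(0)=1/\Gamma(1-\nu)$). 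Since $\mathscr{L}_{\eta,\nu}(0)=0$ a.s., $\bar{p}^{f_{2}}(n,0)=p^{k}(n,0)$, which supplies the last term of the assertion.

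It then remains to integrate by parts in $x$ and invoke \eqref{skellam gov}. The boundary at $x=\infty$ vanishes, the boundary at $x=0$ yields $p^{k}(n,x)\,l_{\eta,\nu}(x,t)\big|_{x=0}$, and the remaining integral $\int_{0}^{\infty}l_{\eta,\nu}(x,t)\,\tfrac{\mathrm d}{\mathrm d x}p^{k}(n,x)\,\mathrm{d}x$ is handled by substituting the forward equation \eqref{skellam gov} for $\tfrac{\mathrm d}{\mathrm d x}p^{k}(n,x)$; integrating each summand against $l_{\eta,\nu}(x,t)$ replaces $\int_{0}^{\infty}p^{k}(n\mp j,x)l_{\eta,\nu}(x,t)\,\mathrm{d}x$ by $\bar{p}^{f_{2}}(n\mp j,t)$. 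Collecting the pieces (the $\eta^{\nu}\bar{p}^{f_{2}}$ term, the $x=0$ boundary term, the three SPoK-generator terms, and the Mittag-Leffler correction) reproduces exactly the claimed identity.

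The step I expect to be the main obstacle is the bookkeeping near $x=0$ and $t=0$: checking that the Caputo--Riemann--Liouville discrepancy for the tempered operator is precisely $p^{k}(n,0)\,t^{-\nu}E^{1-\nu}_{1,1-\nu}(-\eta t)$, and that this is a genuinely separate contribution from the $x=0$ boundary term $p^{k}(n,x)l_{\eta,\nu}(x,t)|_{x=0}$ — the two coincide and cancel only in the untempered case $\eta=0$, just as in the passage from $D_{t}^{\alpha}$ to $\partial_{t}^{\alpha}$ around \eqref{relation}. A self-contained way to both derive the governing equation for $l_{\eta,\nu}$ and keep track of these terms is to start from the hitting-time duality $\Pr\{\mathscr{L}_{\eta,\nu}(t)\le x\}=\Pr\{\mathscr{D}_{\eta,\nu}(x)\ge t\}$, differentiate in $x$ to get $l_{\eta,\nu}(x,t)=-\int_{0}^{t}\partial_{x}h_{\eta,\nu}(x,s)\,\mathrm{d}s$, and then feed in the TSS identity $\partial_{x}h_{\eta,\nu}(x,t)=-\eta h_{\eta,\nu}(x,t)+(\eta^{\nu}-\partial_{t})^{1/\nu}h_{\eta,\nu}(x,t)$ (with $h_{\eta,\nu}(x,0)=\delta_{0}(x)$ and $h_{\eta,\nu}(0,t)=0$) already recalled in the proof of the previous proposition, so that everything reduces to manipulations with $h_{\eta,\nu}$ and its $t$-integral.
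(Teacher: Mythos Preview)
Your proposal is correct and follows essentially the same route as the paper: record the subordination identity, push the operator $(\eta+\partial_t)^{\nu}$ under the integral, invoke the governing equation of $l_{\eta,\nu}$, integrate by parts in $x$ (picking up the boundary contribution at $x=0$), and substitute the SPoK forward equation \eqref{skellam gov}. The only bookkeeping difference is that the paper does not split into Caputo/Riemann--Liouville pieces at all; it simply quotes the governing equation for $l_{\eta,\nu}$ from Kumar {\it et al.} (2019) already in the form
\[
\partial_x l_{\eta,\nu}(x,t)=-\left(\eta+\partial_t\right)^{\nu}l_{\eta,\nu}(x,t)+\eta^{\nu}l_{\eta,\nu}(x,t)-t^{-\nu}E^{1-\nu}_{1,1-\nu}(-\eta t)\,\delta_0(x),
\]
so the Mittag--Leffler term appears directly as the $\delta_0$-source (note $t^{-\nu}E^{1-\nu}_{1,1-\nu}(-\eta t)=t^{-\nu}e^{-\eta t}/\Gamma(1-\nu)$, so your Caputo--RL correction is exactly this same term), and the rest of the argument is line-for-line what you outlined.
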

\begin{proof}
From (\ref{inv1}), we get
\begin{equation}\label{gfdt}
\bar{p}^{f_{2}}(n,t)=\int_{0}^{\infty}p^{k}(n,x)l_{\eta,\nu}(x,t)\,\mathrm{d}x, \ \ t\ge0,
\end{equation}
where $p^{k}(n,x)$ and $l_{\eta,\nu}(x,t)$ are the pmf of SPoK and the pdf of $\mathscr{L}_{\eta,\nu} (t)$, respectively.
		 
The following result will be used (See Kumar {\it et al.} (2019), Eq. (25)):
\begin{equation*}
\frac{\partial}{\partial x}l_{\eta,\nu}(x,t)=-\left(\eta+\frac{\partial}{\partial t}\right)^{\nu}l_{\eta,\nu}(x,t)+\eta^{\nu} l_{\eta,\nu}(x,t)-t^{-\nu}E^{1-\nu}_{1,1-\nu}(-\eta t)\delta_0(x),
\end{equation*}
where $\delta_0(x)=l_{\eta,\nu}(x,0)$. Also, we have $\lim_{x\to \infty}l_{\eta,\nu}(x,t)=0$ (see Alrawashdeh {\it et al.} (2017), Lemma 4.6). Using the above results in (\ref{gfdt}), we get
\begin{align*}
\left(\eta+\frac{\mathrm{d}}{\mathrm{d} t}\right)^{\nu}\bar{p}^{f_{2}}(n,t)&=\int_{0}^{\infty}p^{k}(n,x)\left(-\frac{\partial}{\partial x}l_{\eta,\nu}(x,t)+\eta^{\nu} l_{\eta,\nu}(x,t)-t^{-\nu}E^{1-\nu}_{1,1-\nu}(-\eta t)\delta_0(x)\right)\,\mathrm{d}x\\
&=p^{k}(n,x)l_{\eta,\nu}(x,t)\big|_{x=0}+\int_{0}^{\infty}l_{\eta,\nu}(x,t)\frac{\mathrm{d}}{\mathrm{d}x}p^{k}(n,x)\,\mathrm{d}x+\eta^{\nu}\bar{p}^{f_{2}}(n,t)\\
&\ \ -t^{-\nu}E^{1-\nu}_{1,1-\nu}(-\eta t)\int_{0}^{\infty}p^{k}(n,x)\delta_0(x)\,\mathrm{d}x\\
&=p^{k}(n,x)l_{\eta,\nu}(x,t)\big|_{x=0}+\int_{0}^{\infty}\bigg(-k(\lambda_{1}+\lambda_{2})p^{k}(n,x)+\lambda_{1}\sum_{j=1}^{k}p^{k}(n-j,x)\nonumber\\
&\ \ +\lambda_{2}\sum_{j=1}^{k}p^{k}(n+j,x)\bigg)l_{\eta,\nu}(x,t)\,\mathrm{d}x+\eta^{\nu}\bar{p}^{f_{2}}(n,t)-t^{-\nu}E^{1-\nu}_{1,1-\nu}(-\eta t)p^{k}(n,0)\\
&=p^{k}(n,x)l_{\eta,\nu}(x,t)\big|_{x=0}-k(\lambda_{1}+\lambda_{2})\bar{p}^{f_{2}}(n,t)+\lambda_{1}\sum_{j=1}^{k}\bar{p}^{f_{2}}(n-j,t)\\
&\ \ +\lambda_{2}\sum_{j=1}^{k}\bar{p}^{f_{2}}(n+j,t)+\eta^{\nu}\bar{p}^{f_{2}}(n,t)-t^{-\nu}E^{1-\nu}_{1,1-\nu}(-\eta t)p^{k}(n,0).
\end{align*}
This gives the required result. 
\end{proof}
\subsection{SPoK time-changed by the first hitting time of IG subordinator}
Consider an IG subordinator whose associated Bern\v stein function $f_{3}$ is given in (\ref{ploiuy67}). The first hitting time $\{H(t)\}_{t\ge0}$ of the IG subordinator $\{Y(t)\}_{t\geq0}$ is  defined as
\begin{equation*}
H(t)\coloneqq \inf \{ s\ge 0 : Y(s)>t\},\ \ t\geq0.
\end{equation*}
We define the following time-changed process:
\begin{equation}\label{inv}
\bar{\mathcal{Z}}^{f_{3}}(t)\coloneqq \mathcal{S}^{k}(H (t)),\ \ t\ge0,
\end{equation}
where the SPoK $\{\mathcal{S}^{k}(t)\}_{t\geq0}$ is independent of $\{H(t)\}_{t\ge0}$.
\begin{proposition}
For $n\in \mathbb{Z}$,	the pmf $\bar{p}^{f_3}(n,t)=\mathrm{Pr}\{\bar{\mathcal{Z}}^{f_{3}}(t)=n\}$  solves the following differential equation:
\begin{align*}
\delta\left(\gamma^{2}+2\frac{\mathrm{d}}{\mathrm{d} t}\right)^{1/2}\bar{p}^{f_{3}}(n,t)&=\left(\delta\gamma-k(\lambda_{1}+\lambda_{2})\right)\bar{p}^{f_{3}}(n,t)+\lambda_{1}\sum_{j=1}^{k}\bar{p}^{f_{3}}(n-j,t)\\
&\ \ +\lambda_{2}\sum_{j=1}^{k}\bar{p}^{f_{3}}(n+j,t) -\delta\gamma \mathrm{Erf}\left(\frac{\gamma\sqrt{t}}{\sqrt{2}}\right)p^{k}(n,0),
\end{align*}
where $\mathrm{Erf}(\cdot)$ is the error function.
\end{proposition}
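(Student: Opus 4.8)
The plan is to mimic exactly the structure of the proofs for the TSS-time-changed case (the proposition on $p^{f_{2}}$) and the IG-time-changed case (Proposition \ref{p4.1}), but now working with the \emph{inverse} IG subordinator. First I would write the pmf as a subordination integral
\begin{equation*}
\bar{p}^{f_{3}}(n,t)=\int_{0}^{\infty}p^{k}(n,x)\,l(x,t)\,\mathrm{d}x,\qquad t\ge0,
\end{equation*}
where $l(x,t)$ is the density of the first hitting time $\{H(t)\}_{t\ge0}$ of the IG subordinator. The key input will be a governing equation for $l(x,t)$ with respect to the Bern\v stein function $f_{3}(s)=\delta(\sqrt{2s+\gamma^{2}}-\gamma)$; this has the general form (for inverse subordinators, cf.\ the inverse TSS case and Kumar \emph{et al.})
\begin{equation*}
\frac{\partial}{\partial x}l(x,t)=-f_{3}\!\left(\frac{\partial}{\partial t}\right)l(x,t)+\nu_{f_{3}}(t)\,\delta_{0}(x),
\end{equation*}
where $f_{3}(\partial_{t})=\delta(\gamma^{2}+2\partial_{t})^{1/2}-\delta\gamma$ acts as a tempered fractional operator and $\nu_{f_{3}}(t)$ is the tail of the L\'evy measure of the IG subordinator; one checks directly that $\nu_{f_{3}}(t)=\delta\gamma\,\mathrm{Erf}(\gamma\sqrt{t}/\sqrt{2})$ (up to the constant appearing in the statement). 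I would cite the relevant reference for this identity rather than re-derive it.

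Next I would apply the operator $\delta(\gamma^{2}+2\frac{\mathrm{d}}{\mathrm{d}t})^{1/2}$ to the integral representation, pass it under the integral sign against $p^{k}(n,x)$, and use the governing equation for $l(x,t)$ together with the boundary facts $l(x,0)=\delta_{0}(x)$ and $\lim_{x\to\infty}l(x,t)=0$. This converts the $\partial_{t}$-side into an $\partial_{x}$-side plus the boundary term $p^{k}(n,x)l(x,t)\big|_{x=0}$ and the Dirac term producing $-\delta\gamma\,\mathrm{Erf}(\gamma\sqrt{t}/\sqrt{2})\,p^{k}(n,0)$ after integrating the delta against $p^{k}(n,\cdot)$. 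One then integrates the $\int_{0}^{\infty}p^{k}(n,x)\,\partial_{x}l(x,t)\,\mathrm{d}x$ term by parts, discarding the boundary contributions by the limits just noted, to obtain $-\int_{0}^{\infty} l(x,t)\frac{\mathrm{d}}{\mathrm{d}x}p^{k}(n,x)\,\mathrm{d}x$.

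Finally I would substitute the forward Kolmogorov equation (\ref{skellam gov}) for $\frac{\mathrm{d}}{\mathrm{d}x}p^{k}(n,x)$, namely
\begin{equation*}
\frac{\mathrm{d}}{\mathrm{d}x}p^{k}(n,x)=-k(\lambda_{1}+\lambda_{2})p^{k}(n,x)+\lambda_{1}\sum_{j=1}^{k}p^{k}(n-j,x)+\lambda_{2}\sum_{j=1}^{k}p^{k}(n+j,x),
\end{equation*}
and integrate each term against $l(x,t)$, recognizing $\int_{0}^{\infty}p^{k}(m,x)l(x,t)\,\mathrm{d}x=\bar{p}^{f_{3}}(m,t)$ for every index $m$. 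Collecting terms, moving the $\delta\gamma\,\bar{p}^{f_{3}}(n,t)$ produced by the operator's constant part to the left, and reorganizing gives precisely the stated equation. The main obstacle, as in the companion propositions, is purely analytic bookkeeping: justifying the interchange of the (nonlocal, tempered-fractional) operator $\delta(\gamma^{2}+2\partial_{t})^{1/2}$ with the integral over $x$, and controlling the boundary term at $x=0$ — the value $p^{k}(0,t)l(0+,t)$ contributes the Erf-term and must be handled carefully via the known behavior of $l(x,t)$ near $x=0$. Everything else is a routine repetition of the integration-by-parts-plus-substitute-(\ref{skellam gov}) scheme already used twice above.
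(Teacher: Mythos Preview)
Your overall strategy matches the paper's: write $\bar p^{f_3}(n,t)=\int_0^\infty p^k(n,x)h(x,t)\,\mathrm{d}x$, use a governing equation for the inverse-IG density $h(x,t)$, integrate by parts, and substitute (\ref{skellam gov}). However, two concrete points in your sketch are wrong and would prevent the argument from closing.

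First, the coefficient of $\delta_0(x)$ in the governing equation for $h(x,t)$ is \emph{not} $\delta\gamma\,\mathrm{Erf}(\gamma\sqrt t/\sqrt2)$. The paper uses the equation of Wyloma\'nska \emph{et al.}:
\[
\frac{\partial}{\partial x}h(x,t)=-\delta\Big(\gamma^{2}+2\frac{\partial}{\partial t}\Big)^{1/2}h(x,t)+\delta\gamma\,h(x,t)-\frac{\delta\sqrt{2}\,e^{-\gamma^{2}t/2}}{\sqrt{\pi t}}\,\delta_0(x),
\]
so the Dirac term contributes $-\dfrac{\delta\sqrt{2}\,e^{-\gamma^{2}t/2}}{\sqrt{\pi t}}\,p^{k}(n,0)$, not the $\mathrm{Erf}$ expression. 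Second, you cannot ``discard the boundary contributions'' in the integration by parts: $h(0+,t)\neq 0$. The paper invokes the explicit value from Vellaisamy and Kumar (2018),
\[
h(0+,t)=\frac{\delta\sqrt{2}\,e^{-\gamma^{2}t/2}}{\sqrt{\pi t}}-\delta\gamma\,\mathrm{Erf}\Big(\frac{\gamma\sqrt t}{\sqrt 2}\Big),
\]
so the boundary term $p^{k}(n,0)h(0+,t)$ and the Dirac term \emph{combine}, the $\dfrac{\delta\sqrt{2}\,e^{-\gamma^{2}t/2}}{\sqrt{\pi t}}$ pieces cancel, and only then does $-\delta\gamma\,\mathrm{Erf}(\gamma\sqrt t/\sqrt2)\,p^{k}(n,0)$ emerge. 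Your proposal attributes the $\mathrm{Erf}$ term to the Dirac contribution alone and treats the $x=0$ boundary as negligible; both need to be corrected as above for the computation to balance.
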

\begin{proof}
From (\ref{inv}), we have
\begin{equation}\label{gfdt1}
\bar{p}^{f_{3}}(n,t)=\int_{0}^{\infty}p^{k}(n,x)h(x,t)\,\mathrm{d}x, \ \ t\ge0,
\end{equation}
where $h(x,t)$ is the pdf of $H(t)$.

The following result will be used (see Wyloma\'nska {\it et al.} (2016), Eq. (2.22))
\begin{equation*}
\frac{\partial}{\partial x}h(x,t)=-\delta\left(\gamma^{2}+2\frac{\partial}{\partial t}\right)^{1/2}h(x,t)+\delta\gamma h(x,t)-\frac{\delta\sqrt{2}e^{-\gamma^{2}t/2}}{\sqrt{\pi t}}\delta_0(x),
\end{equation*}	
with the initial condition $h(x,0)=\delta_0(x)$. Using  the above result in (\ref{gfdt1}), we get
\begin{align*}
\delta\left(\gamma^{2}+2\frac{\mathrm{d}}{\mathrm{d} t}\right)^{1/2}\bar{p}^{f_{3}}(n,t)&=\int_0^{\infty} p^{k}(n,x)\left(-\frac{\partial}{\partial x}h(x,t)+\delta\gamma h(x,t)-\frac{\delta\sqrt{2}e^{-\gamma^{2}t/2}}{\sqrt{\pi t}}\delta_0(x)\right)\mathrm{d}x\\
&=p^{k}(n,0)h(0,t)+\int_0^{\infty}h(x,t)\frac{\mathrm{d}}{\mathrm{d}x}p^{k}(n,x)\mathrm{d}x+\delta\gamma\bar{p}^{f_{3}}(n,t)\\
&\ \  -\frac{\delta\sqrt{2}e^{-\gamma^{2}t/2}}{\sqrt{\pi t}}p^{k}(n,0)\\
&=p^{k}(n,0)h(0,t)+\int_0^{\infty}\bigg(-k(\lambda_{1}+\lambda_{2})p^{k}(n,x)+\lambda_{1}\sum_{j=1}^{k}p^{k}(n-j,x)\\
&\ \ +\lambda_{2}\sum_{j=1}^{k}p^{k}(n+j,x)\bigg)h(x,t)\mathrm{d}x+\delta\gamma\bar{p}^{f_{3}}(n,t)-\frac{\delta\sqrt{2}e^{-\gamma^{2}t/2}}{\sqrt{\pi t}}p^{k}(n,0)\\
&=p^{k}(n,0)h(0,t)-k(\lambda_{1}+\lambda_{2})\bar{p}^{f_{3}}(n,t)+\lambda_{1}\sum_{j=1}^{k}\bar{p}^{f_{3}}(n-j,t)\\
&\ \ +\lambda_{2}\sum_{j=1}^{k}\bar{p}^{f_{3}}(n+j,t)+\delta\gamma\bar{p}^{f_{3}}(n,t)-\frac{\delta\sqrt{2}e^{-\gamma^{2}t/2}}{\sqrt{\pi t}}p^{k}(n,0)\\
&=\left(\delta\gamma-k(\lambda_{1}+\lambda_{2})\right)\bar{p}^{f_{3}}(n,t)+\lambda_{1}\sum_{j=1}^{k}\bar{p}^{f_{3}}(n-j,t)\\
&\ \ +\lambda_{2}\sum_{j=1}^{k}\bar{p}^{f_{3}}(n+j,t) -\delta\gamma \mathrm{Erf}\left(\frac{\gamma\sqrt{t}}{\sqrt{2}}\right)p^{k}(n,0).
\end{align*}	
The last step follows by using the following result (see Vellaisamy and Kumar (2018), Proposition 2.2):
\begin{equation*}
\lim\limits_{x\to0}h(x,t)=h(0,t)=\delta e^{-\gamma^{2}t/2}\left(\sqrt{\frac{2}{\pi t}}-\gamma  e^{\gamma^{2}t/2} \mathrm{Erf}\left(\frac{\gamma\sqrt{t}}{\sqrt{2}}\right)\right).
\end{equation*}
This completes the proof.
\end{proof}
\section{Concluding remarks}
 In this paper, we introduced and studied a fractional version of the SPoK, namely, the FSPoK. It is obtained by time-changing the SPoK with  an independent inverse stable subordinator. Its one-dimensional distribution, pgf, mean, variance and covariance are obtained. It is shown that the FSPoK exhibits the LRD property. Also, we considered two time-changed versions of the FSPoK where time-change is done using a L\'evy subordinator and its inverse. Some distributional properties and particular cases are discussed for these time-changed processes. Kerss {\it et al.} (2014) introduced and studied a fractional version of the Skellam process, namely, the FSP. They showed its applications to high frequency financial data set. As the FSPoK is a generalized version of the FSP, we expect the FSPoK to have potential applications in finance and related fields.

\end{document}